\newtheorem{theo}{Theorem}
\newtheorem{prop}[theo]{Proposition}
\newtheorem{coro}[theo]{Corollary}
\newtheorem{lemm}[theo]{Lemma}
\newtheorem{Main Theorem}{Main Theorem}
\newtheorem{theoalph}{Theorem}
\newdefinition{defi}[theo]{Definition}
\newdefinition{rema}[theo]{Remark}
\newdefinition{exam}[theo]{Example}
\newdefinition{ques}[theo]{Question}
\newproof{proof}{Proof}
\newcommand{\partn}[1]{{\smallskip \noindent \textbf{#1.}}}
\numberwithin{equation}{section}       
\DeclareMathAlphabet{\mathpzc}{OT1}{pzc}{m}{it} 
\newcommand{\C}{\mathbb{C}}
\newcommand{\N}{\mathbb{N}}
\newcommand{\Q}{\mathbb{Q}}
\newcommand{\R}{\mathbb{R}}
\newcommand{\Z}{\mathbb{Z}}
\newcommand{\cM}{\mathcal{M}}
\newcommand{\cO}{\mathcal{O}}
\newcommand{\sC}{\mathscr{C}}
\newcommand{\sP}{\mathscr{P}}
\newcommand{\tA}{\widetilde{A}}
\newcommand{\teta}{\widetilde{\teta}}
\renewcommand{\=}{ : = }
\newcommand{\parameter}{\lambda}
\newcommand{\expansion}[1]{\langle #1 \rangle}
\newcommand{\pcs}{post\nobreakdash-critical set}
\newcommand{\Choquet}{\mathscr{C}}
\newcommand{\Polish}{\sP}
\newcommand{\Sunimodal}{\textrm{S}\nobreakdash-unimodal}
\newcommand{\lefti}{I}
\newcommand{\righti}{J}
\newcommand{\kneading}{Q_{(a, q)}}
\begin{document}

\title{Choquet simplices as spaces of invariant probability measures on post-critical sets}

\author[mi]{Mar{\'\i}a Isabel Cortez\fnref{fn1}}
\ead{maria.cortez@usach.cl}
\address[mi]{Departamento de Matem{\'a}tica y Ciencia de la Computaci{\'o}n, Universidad de Santiago de Chile, Av. Libertador Bernardo O'Higgins~3363, Santiago, Chile.}
\fntext[fn1]{Partially supported by Fondecyt de Iniciaci{\'o}n 11060002, Nucleus Millenius P04-069-F, and Research Network on Low Dimensional Dynamics, PBCT ACT-17, CONICYT, Chile} 

\author[j]{Juan Rivera-Letelier\fnref{fn2}}
\ead{riveraletelier@mat.puc.cl}
\address[j]{Facultad de Matem{\'a}ticas, Campus San Joaqu{\'\i}n, P.~Universidad Cat{\'o}lica de Chile, Avenida Vicu{\~n}a Mackenna~4860, Santiago, Chile}
\fntext[fn2]{Partially supported by Research Network on Low Dimensional Dynamics, PBCT ACT-17, CONICYT, Chile} 

\begin{abstract}
A well-known consequence of the ergodic decomposition theorem is that the space of invariant probability measures of a topological dynamical system, endowed with the weak$^*$ topology, is a non-empty metrizable Choquet simplex.
We show that every non-empty metrizable Choquet simplex arises as the space of invariant probability measures on the \pcs{} of a logistic map.
Here, the \pcs{} of a logistic map is the $\omega$-limit set of its unique critical point.
In fact we show the logistic map~$f$ can be taken in such a way that its \pcs{} is a Cantor set where~$f$ is minimal, and such that each invariant probability measure on this set has zero Lyapunov exponent, and is an equilibrium state for the potential~$- \ln |f'|$.
\end{abstract}


\begin{keyword}
Logistic map
\sep \pcs
\sep invariant measures
\sep Choquet simplices
\sep minimal Cantor system
\sep generalized odometer

\MSC 37E05 \sep 37A99 \sep 37B10 \sep 54H20
\end{keyword}

\maketitle

%
%

\section{Introduction}
A well-known consequence of the ergodic decomposition theorem is that the space of invariant probability measures of a topological dynamical system, endowed with the weak$^*$ topology, is a metrizable Choquet simplex.\footnote{See for example~\cite[p.~95]{Gla03}.
We recall the definition of Choquet simplex in~\S\ref{ss:Choquet simplices}.}
The purpose of this paper is to show that every non-empty metrizable Choquet simplex arises in this way within the \textit{logistic family} of maps~$(f_\parameter)_{(0, 4]}$, where for each parameter~$\parameter$ in $(0, 4]$ the \textit{logistic map}~$f_\parameter : [0, 1] \to [0, 1]$ is defined by,
$$ f_\parameter(x) = \parameter x (1 - x). $$
To make a more precise statement note that $x = \tfrac{1}{2}$ is the unique point in $[0, 1]$ at which the derivative of~$f_{\parameter}$ vanishes.
We call $x = \tfrac{1}{2}$ the \textit{critical point} of~$f_\parameter$, and its $\omega$\nobreakdash-limit set is called the \textit{\pcs{}} of~$f_{\parameter}$.
It is a compact set that is forward invariant by~$f_{\parameter}$.

The following is our main result.
Recall that for a compact topological space~$X$ a continuous map~$T : X \to X$ is \textit{minimal}, if every forward orbit of~$T$ is dense in~$X$.

\

\noindent
\textbf{Main Theorem.} \textit{For each non-empty metrizable Choquet simplex~$\Choquet$ there is a parameter~$\parameter \in (0, 4]$ such that the \pcs{} of the logistic map~$f_{\parameter}$ is a Cantor set, the restriction of~$f_{\parameter}$ to this set is minimal, and such that the space of invariant probability measures supported by this set, endowed with weak$^*$ topology, is affine homeomorphic to~$\Choquet$.}

\

The first result of this kind was shown by Downarowicz in~\cite{Dow91}, who showed that every non-empty metrizable Choquet simplex arises, up to an affine homeomorphism, as the space of invariant probability measures of a ``minimal Cantor system''; that is, a dynamical system generated by a minimal homeomorphism of a Cantor set.
In fact he showed that the minimal Cantor system can be taken as a ``0\nobreakdash-1 Toeplitz flow'': A special type of subshift of~$\{0, 1 \}^{\Z}$.
See also~\cite{GjeJoh00,Orm97} for a different approach to this result, and see~\cite{Cor06} for an analogous result in the case of actions of ~$\Z^d$.

The following corollary is a direct consequence of the Main Theorem and of the fact that for each non-empty Polish space~$\Polish$ there is a metrizable Choquet simplex whose set of extreme points is homeomorphic to~$\Polish$, see for example~\cite{Hay75}.
\begin{coro}\label{c:extreme realization}
For each non-empty Polish space~$\Polish$ there is a parameter~$\parameter \in (0, 4]$ such that the \pcs{} of the logistic map~$f_{\parameter}$ is a Cantor set, the restriction of~$f_{\parameter}$ to this set is minimal, and such that the space of ergodic and invariant probability measures supported by this set, endowed with weak$^*$ topology, is homeomorphic to~$\Polish$.
\end{coro}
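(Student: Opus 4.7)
The plan is to deduce this corollary from the Main Theorem together with Haydon's theorem~\cite{Hay75}, which produces, for every non-empty Polish space~$\Polish$, a metrizable Choquet simplex~$\Choquet$ whose set of extreme points is homeomorphic to~$\Polish$.

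First, given~$\Polish$, I would fix such a~$\Choquet$ and apply the Main Theorem to it. This yields a parameter~$\parameter \in (0, 4]$ for which the \pcs{}~$K$ of~$f_{\parameter}$ is a Cantor set on which~$f_{\parameter}$ is minimal, together with an affine homeomorphism~$\Phi$ from~$\Choquet$ onto the simplex~$M$ of~$f_{\parameter}$-invariant probability measures supported on~$K$, where~$M$ carries the weak$^*$ topology.

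Next I would invoke two standard facts to pass from~$M$ to its ergodic members. First, by the ergodic decomposition theorem, the ergodic~$f_{\parameter}$-invariant probability measures on~$K$ are precisely the extreme points of~$M$. Second, any affine homeomorphism between Choquet simplices sends extreme points to extreme points and is bicontinuous, hence restricts to a homeomorphism between the two extreme boundaries. Composing the restriction of~$\Phi$ to~$\mathrm{ext}(\Choquet)$ with Haydon's homeomorphism between~$\mathrm{ext}(\Choquet)$ and~$\Polish$ delivers the desired homeomorphism from~$\Polish$ onto the space of ergodic~$f_{\parameter}$-invariant probability measures supported on~$K$.

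No step in this reduction presents a genuine obstacle: all the substance is concentrated in the Main Theorem and in Haydon's construction of simplices with prescribed extreme boundary. The only point worth a brief verification is that an affine homeomorphism of Choquet simplices preserves the extreme boundary as a topological subspace, and this is immediate from the affinity and bijectivity of the map together with its bicontinuity.
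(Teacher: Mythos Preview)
Your argument is correct and follows exactly the route indicated in the paper: the corollary is stated there as a direct consequence of the Main Theorem together with Haydon's result~\cite{Hay75}, via the identification of ergodic measures with extreme points. The only additional content you supply is the routine observation that an affine homeomorphism carries extreme points to extreme points, which the paper leaves implicit.
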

The special case where the Polish space~$\Polish$ is compact and totally disconnected is precisely~\cite[Main Theorem]{CorRiv0804}.
The first result in this direction was shown by~Bruin, who gave an example of a parameter~$\parameter \in (0, 4]$ such that the \pcs{} of~$f_\parameter$ is a Cantor set where~$f_\parameter$ is minimal, but not uniquely ergodic~\cite[Theorem~4]{Bru03}.
The proof of the Main Theorem is based on the tools developed by Bruin in~\cite{Bru03}, and by Bruin, Keller and St.~Pierre in~\cite{BruKelsPi97}.

One of the interesting features of the Main Theorem, in constrast with the other realization results mentioned above, is that the systems we consider have a natural differentiable structure.
It turns out that, for the parameters~$\parameter \in (0, 4]$ given by (the proof of) the Main Theorem, the invariant measures supported by the \pcs{} of~$f_\parameter$ correspond precisely to those invariant measures~$\mu$ of~$f_\parameter$ whose Lyapunov exponent
$$ \chi(\mu) \= \int \ln |f_\parameter'| d\mu, $$
vanishes~\cite[Lemma~21]{CorRiv0804}.
It also turns out that every invariant probability measure supported on the \pcs{} of~$f_\parameter$ is an ``equilibrium state of~$f_\parameter$ for the potential~$- \ln |f_\lambda'|$''.
That is, if for each invariant measure~$\mu$ we denote by~$h_\mu$ its measure theoretic entropy, then the supremum
$$ \sup \left\{ h_\mu - \chi(\mu) \mid \mu \text{ invariant probability measure of~$f_\parameter$} \right\}, $$
is attained at each invariant probability measure supported by the \pcs{} of~$f_\parameter$, see~\cite[Lemma~21]{CorRiv0804}.

We thus obtain the following corollary of the Main Theorem.
\begin{coro}\label{c:ergodic theory interval}
For each non-empty metrizable Choquet simplex~$\Choquet$ there is a parameter~$\parameter \in (0, 4]$ verifying the conclusions of the Main Theorem, and such that in addition the space of invariant probability measures of~$f_\parameter$ (resp. equilibrium states of~$f_\parameter$ for the potential $- \log |f_\parameter'|$) that are of zero Lyapunov exponent, endowed with the weak$^*$ topology, is affine homeomorphic to~$\sC$.
\end{coro}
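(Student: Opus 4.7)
The plan is to take the parameter~$\parameter \in (0,4]$ produced by the Main Theorem and verify the two additional affine homeomorphisms using the results about the Lyapunov exponent and equilibrium states quoted from~\cite[Lemma~21]{CorRiv0804} in the paragraph preceding the corollary.

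First, fix~$\parameter$ satisfying the conclusions of the Main Theorem: the \pcs{} $K$ of~$f_\parameter$ is a Cantor set, $f_\parameter|_K$ is minimal, and the space $\cM_K$ of $f_\parameter$-invariant probability measures supported on~$K$ is affine homeomorphic to~$\Choquet$ via some map~$\Phi$. For this same~$\parameter$, the cited~\cite[Lemma~21]{CorRiv0804} supplies two facts: (a) an $f_\parameter$-invariant probability measure~$\mu$ on~$[0,1]$ satisfies $\chi(\mu) = 0$ if and only if $\supp \mu \subset K$; and (b) every measure in~$\cM_K$ attains the supremum defining equilibrium states of~$f_\parameter$ for the potential $-\log|f_\parameter'|$.

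Denote by~$\cM_0$ the set of $f_\parameter$-invariant probability measures of zero Lyapunov exponent, and by~$\cE_0$ the subset of~$\cM_0$ consisting of those measures that are also equilibrium states for $-\log|f_\parameter'|$. By~(a) we have $\cM_0 = \cM_K$, and by~(b) we have $\cM_K \subset \cE_0$; combined with the trivial inclusion $\cE_0 \subset \cM_0$, the three sets~$\cM_K$, $\cM_0$, $\cE_0$ all coincide. It remains to transport the affine homeomorphism~$\Phi$ from the weak$^*$ topology on probability measures on~$K$ to the weak$^*$ topology on measures on~$[0,1]$ restricted to those supported on~$K$. The natural inclusion between these two spaces is an affine homeomorphism onto its image: continuity in one direction is immediate from restriction of test functions, and continuity in the other direction follows since every continuous function on~$K$ extends to a continuous function on~$[0,1]$ by the Tietze extension theorem. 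Composing this identification with~$\Phi$ yields the desired affine homeomorphism onto~$\Choquet$ in either of the two ambient descriptions.

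No serious obstacle is anticipated: the corollary is essentially a bookkeeping consequence of the Main Theorem together with the quoted~\cite[Lemma~21]{CorRiv0804}. The only mildly delicate point is the weak$^*$ topology identification in the last step, which is a standard compatibility argument but should be stated explicitly so that the affine homeomorphism of the Main Theorem can be invoked verbatim.
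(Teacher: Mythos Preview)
Your proposal is correct and follows exactly the approach indicated in the paper: the corollary is stated as an immediate consequence of the Main Theorem together with the two facts quoted from \cite[Lemma~21]{CorRiv0804}, and your argument unpacks precisely that deduction. The explicit remark on the compatibility of the two weak$^*$ topologies is a harmless addition that the paper leaves implicit.
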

This result is in sharp contrast with the fact that for a logistic map there can be at most one ergodic equilibrium state whose Lyapunov exponent is strictly positive.\footnote{This follows from a result of Leddrapier in~\cite{Led81}, that every such equilibrium state is absolutely continuous with respect to the Lebesgue measure, and from the fact that logistic maps are ergodic with respect to the Lebesgue measure~\cite{BloLyu91}.}

For future reference we state an holomorphic version of Corollary~\ref{c:ergodic theory interval}, shown in~Appendix~\ref{a:ergodic theory}.
For a complex parameter~$\parameter \in \C$ denote by~$P_\parameter$ the quadratic polynomial defined by
$$ P_\parameter (z) = \parameter z (1 - z), $$
viewed as a dynamical system acting on~$\C$.
\begin{coro}\label{c:ergodic theory holomorphic}
For each non-empty metrizable Choquet simplex~$\Choquet$ there is a parameter~$\parameter \in (0, 4]$ verifying the conclusions of the Main Theorem, and such that in addition, if we denote by~$t_0$ the Hausdorff dimension of the Julia set of~$P_\parameter$, then the space of invariant probability measures of~$P_\parameter$ (resp. equilibrium states of~$P_\parameter$ for the potential $- t_0 \log |P_\parameter'|$) that are of zero Lyapunov exponent, endowed with the weak$^*$ topology, is affine homeomorphic to~$\sC$.
\end{coro}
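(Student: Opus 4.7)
The plan is to derive Corollary~\ref{c:ergodic theory holomorphic} from Corollary~\ref{c:ergodic theory interval} by matching the relevant spaces of measures of the complex polynomial $P_\parameter$ with those of its real restriction $f_\parameter$. First, I would apply Corollary~\ref{c:ergodic theory interval} to obtain a parameter $\parameter \in (0,4]$ verifying the conclusions of the Main Theorem, so that the space of $f_\parameter$-invariant probability measures with zero Lyapunov exponent --- equivalently, those supported on the post-critical set --- is affine homeomorphic to $\Choquet$. Since $\parameter \in \R$, the polynomial $P_\parameter$ preserves $[0,1]$ and restricts there to $f_\parameter$, with $|P_\parameter'| = |f_\parameter'|$ on $[0,1]$, so every such $f_\parameter$-invariant measure lifts to a $P_\parameter$-invariant measure on $J(P_\parameter)$ having the same (zero) Lyapunov exponent.

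The crucial step is the converse inclusion: every $P_\parameter$-invariant probability measure on $J(P_\parameter)$ with zero Lyapunov exponent is supported on the post-critical set. For this I would invoke the complex-dynamical result --- due to Przytycki, and refined in subsequent work of Przytycki, Rivera-Letelier and Smirnov --- stating that, on the Julia set of a rational map, any ergodic invariant probability measure with zero Lyapunov exponent must be supported on the $\omega$-limit set of the critical set. In our situation this $\omega$-limit set coincides with the post-critical set of $f_\parameter$ inside $[0,1]$, reducing the statement to its real counterpart; the non-ergodic case then follows from the ergodic decomposition theorem.

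For the equilibrium-state identification I would combine two standard ingredients. First, Bowen's formula for complex polynomials (valid in much greater generality than hyperbolicity) gives $P_{\mathrm{top}}(-t_0 \log|P_\parameter'|) = 0$, where $t_0 = \HD(J(P_\parameter))$. Second, Ruelle's inequality for holomorphic dynamics in complex dimension one yields $h_\mu \le 2\chi^+(\mu)$, so that any $P_\parameter$-invariant measure $\mu$ with $\chi(\mu) = 0$ automatically has $h_\mu = 0$; in particular $h_\mu - t_0 \chi(\mu) = 0$ and $\mu$ is an equilibrium state for $-t_0 \log|P_\parameter'|$. Conversely, an equilibrium state with $\chi(\mu) = 0$ is by definition an invariant measure of zero Lyapunov exponent. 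The two sets appearing in the statement therefore coincide, and by the identification of the first two paragraphs both are affine homeomorphic to $\Choquet$.

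The main obstacle I anticipate is locating the precise citations for Bowen's formula and the Przytycki-type support theorem that cover our non-hyperbolic parameters, whose critical point has a minimal Cantor $\omega$-limit on which the dynamics carries no expansion. Once those complex-dynamical inputs are in place, the proof reduces to a transparent transfer of Corollary~\ref{c:ergodic theory interval}, which also explains why the main body of the paper is developed entirely in the real setting.
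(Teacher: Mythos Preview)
Your proposal is correct and close in spirit to the paper's argument, but the two differ in how they handle the equilibrium-state part. The paper reduces everything to a single lemma (Lemma~\ref{l:ergodic theory holomorphic}) establishing the equivalence, for invariant measures on~$J(P_\parameter)$, of being supported on the \pcs, having zero Lyapunov exponent, and being a zero-exponent equilibrium state for $-t_0\log|P_\parameter'|$. For $2\Rightarrow 1$ the paper does not invoke a Przytycki--Rivera-Letelier--Smirnov support theorem but simply says the argument is analogous to the real case in~\cite[Lemma~21]{CorRiv0804}. For $2\Rightarrow 3$ the paper avoids Bowen's formula entirely: using Przytycki's non-negativity of the Lyapunov exponent~\cite{Prz93}, it treats the two cases $\chi(\mu)=0$ and $\chi(\mu)>0$ separately. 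In the first case the already-established implication $2\Rightarrow 1$ plus zero topological entropy on the \pcs{} (Proposition~\ref{p:continuous unimodal}) gives $h_\mu=0$; in the second case the Ledrappier--Ma\~n\'e dimension formula $h_\mu=\HD(\mu)\,\chi(\mu)$~\cite{Led84,Man88} together with $\HD(\mu)\le t_0$ yields $h_\mu - t_0\chi(\mu)\le 0$. This directly proves that the pressure at $t_0$ is zero and that it is attained precisely by the post-critical measures, so Bowen's formula comes out as a consequence rather than an input. Your route via Bowen's formula plus Ruelle's inequality is valid, but it shifts the burden to locating a version of Bowen's formula covering these non-hyperbolic parameters --- exactly the obstacle you flag --- whereas the paper's argument is self-contained from the cited classical results.
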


We end this introduction by stating some questions that arise naturally from the Main Theorem.
To do this, for each~$\parameter \in (0, 4]$ we will denote by~$X_\parameter$ the \pcs{} of~$f_\parameter$.
Given a compact metrizable topological space~$X$ and a continuous map~$T : X \to X$, the Main Theorem implies that there is a parameter~$\parameter \in (0, 4]$ and an affine homeomorphism~$H$ between the space of invariant probability measures supported on~$X_\parameter$.
It is thus natural to ask whether the parameter~$\parameter \in (0, 4]$ and~$H$ can be chosen in such a way that~$H$ is induced by a continuous map between~$X_\parameter$ and~$X$.
More precisely, the question is if~$\parameter \in (0, 4]$ and~$H$ can be chosen in such a way that there is a continuous map~$h : X_\parameter \to X$ such that for each invariant probability measure~$\mu$ supported by~$X_\parameter$ we have~$H(\mu) = h_* \mu$.

This type of problem is very well understood in the setting of minimal Cantor systems: Giordano, Putnam, and Skau have shown in~\cite{GioPutSka95} that for two minimal Cantor systems~$(X, T)$ and $(X',T')$ there exists a homeomorphism $h : X\to X'$ that induces an affine homeomorphism between the corresponding spaces of invariant probability measures, if, and only if,~$(X, T)$ and~$(X', T')$ are ``orbit equivalent'': There is a homeomorphism between~$X$ and~$X'$ mapping each orbit of~$T$ to an orbit of~$T'$.
Furthermore, to each minimal Cantor system~$(X, T)$ one can associate a dimension group that is a complete invariant for the orbit equivalence relation~\cite{GioPutSka95}: Two minimal Cantor systems are orbit equivalent if, and only if, the corresponding dimension groups are isomorphic as ordered groups with unit.\footnote{The dimension group is defined as the quotient of the space of continuous functions defined on~$X$ and taking values in~$\Z$, by the subgroup of functions whose integral with respect to each invariant measure vanishes; the positive cone is the set of those classes containing a function taking values in~$\N_0$, and the unit is the class of the constant function equal to~1. See for example~\cite[Theorem~1.13]{GioPutSka95}.}

It is thus natural to look for a special class of minimal Cantor systems realizing all of the orbit equivalent classes.
Since the dimension group associated to each Toeplitz flow contains the dimension group of an odometer as a subgroup \cite[Section 4.1]{GjeJoh00}, the class of Toeplitz flows is not sufficient to realize all orbit equivalence classes, in spite of the fact that this class realizes all the non-empty metrizable Choquet simplices as sets of invariant probability measures~\cite{Dow91}.

So the question remains whether minimal \pcs s of logistic maps realize all orbit equivalence classes.
In order to formulate a precise question we will consider natural extensions to stay in the class of minimal Cantor systems, and use the generalized odometer associated to a kneading map, see~\S\S\ref{ss:unimodal}, \ref{ss:kneading odometer} for definitions.
\begin{ques}
Does every orbit equivalence class contain the natural extension of a generalized odometer associated to a kneading map?
\end{ques}
\begin{ques}
Does every uniquely ergodic orbit equivalence class contain the natural extension of a generalized odometer associated to a kneading map?
\end{ques}
It is well-known that every odometer can be realized, up to a homeomorphism, as the \pcs{} of an infinitely renormalizable logistic map, see also~\cite{BloKeeMis06}.
In~\S\ref{ss:example} we give an example of a uniquely ergodic generalized odometer associated to a kneading map, whose natural extension is not orbit equivalent to an odometer, nor to a Toeplitz flow.

\subsection{Notes and references}
\label{ss:notes and references}
Although Corollary~\ref{c:extreme realization} is stronger than~\cite[Main Theorem]{CorRiv0804}, we use this last result in the proof of the Main Theorem to deal with case of finite dimensional Choquet simplices.

We have stated the Main Theorem and Corollary~\ref{c:extreme realization} for the logistic family for simplicity.
We show that an analogous statement holds for each full family of unimodal maps, as well as for the family of symmetric tent maps.
See~\S\ref{ss:unimodal} for definitions.
In fact, for each infinitely dimensional metrizable Choquet simplex we construct kneading map~$Q$ such that the conclusions of the Main Theorem hold for each unimodal map whose kneading map is~$Q$, see~\S\ref{s:doubly resonant kneading}.
Furthermore the kneading map satisfies for every~$k \in \N_0$ the inequality~$Q(k) \le \max \{ 0, k - 2 \}$ (part~1 of Lemma~\ref{l:doubly resonant kneading}), and therefore every full family of unimodal maps, as well as the family of symmetric tent maps, contains a unimodal map whose kneading map is~$Q$.
For the case of finite dimensional Choquet simplices see~\cite[Remark~1]{CorRiv0804}.

Similarly, it follows from~\cite[Lemma~21]{CorRiv0804} that Corollary~\ref{c:ergodic theory interval} holds for every full family of \Sunimodal{} maps.

See~\cite[\S15]{Dow05} for a survey on realization results concerning Toeplitz flows.

See~\cite{GamMar06} for the realization of some concrete simplices as the space of invariant measures of minimal Cantor systems.

\subsection{Strategy and organization}
In this section we explain the strategy of the proof of the Main Theorem and simultaneously describe the organization of the paper.

We only deal with infinitely dimensional Choquet simplices, the finite dimensional case being covered by~\cite[Main Theorem]{CorRiv0804}.
We use a result of Lazar and Lindenstrauss that characterizes infinite dimensional metrizable Choquet simplices as inverse limits of stochastic matrices, see Theorem~\ref{t:Choquet as inverse limit} in~\S\ref{ss:Choquet simplices}.

We describe the logistic maps in the Main Theorem through their associated ``kneading map'', see~\S\ref{ss:unimodal} for the definition of kneading map and further background on unimodal maps.
In fact, the conclusions of the Main Theorem are valid for each unimodal map having the same kneading map as~$f_\parameter$.
To ensure that the \pcs{} is a Cantor set where the unimodal map is minimal, it is enough to require that the kneading map diverges to~$+\infty$ (Proposition~\ref{p:continuous unimodal}).

In~\S\ref{ss:doubly resonant kneading} we introduce a class of kneading maps that diverge to~$+ \infty$.
In~\S\ref{ss:proof of Main Theorem} we state a result describing the space of invariant measures supported on the \pcs{} of a unimodal map with a kneading map in this class (Theorem~\ref{t:invariant of doubly resonant}).
In~\S\ref{ss:proof of Main Theorem} we also give a proof of the Main Theorem assuming Theorem~\ref{t:invariant of doubly resonant}.

In~\S\ref{ss:kneading odometer} we recall the definition and some properties of the generalized odometer associated to a kneading map, that was introduced in~\cite{BruKelsPi97}.
In~\S\ref{ss:measure isomorphism} we show that for a unimodal map whose kneading map~$Q$ is as in~\S\ref{ss:doubly resonant kneading}, the space of invariant probability measures supported by the \pcs{} is affine homeomorphic to that of the generalized odometer associated to~$Q$.
In turn, this space is affine homeomorphic to the corresponding space of the Bratteli-Vershik system associated to~$Q$, introduced in~\cite{Bru03}; see~\cite[Proposition~2]{Bru03} or Theorem~\ref{t:reduction to Bratteli} in~\S\ref{ss:kneading Bratteli-Vershik}.
The advantage of this last space is that it can be described explicitly as an inverse limit of some ``transition matrices'', see~\S\ref{ss:Bratteli-Vershik}. 
We calculate the transition matrices and some of their products in~\S\ref{ss:transition matrices}, and give the proof of Theorem~\ref{t:invariant of doubly resonant} in~\S\ref{ss:proof of invariant of doubly resonant}.

In~\S\ref{ss:example} we give an example of a uniquely ergodic generalized odometer associated to a kneading map whose natural extension is not orbit equivalent to a Toeplitz flow.

In Appendix~\ref{a:ergodic theory} we give the proof of Corollary~\ref{c:ergodic theory holomorphic}.

\subsection{Acknowledgments}
We are grateful with Christian Skau for his help with dimension groups of Toeplitz flows, and an anonymous referee for his/her suggestions to improve the presentation of the paper.

\section{Preliminaries}\label{s:preliminaries}
After fixing some notation in~\S\ref{ss:linear algebra}, we review some concepts and results about Choquet simplices (\S\ref{ss:Choquet simplices}), and unimodal maps~(\S\ref{ss:unimodal}).

Throughout the rest of this article~$\N$ denotes the ring of strictly positive integers and $\N_0 \= \N \cup \{ 0 \}$.
We will use the interval notation for subsets of~$\N_0$: For $n, n' \in \N_0$ such that $n' \ge n$, we put
$$ [n, n'] \= \{ k \in \N_0 \mid n \le k \le n' \}, $$
and we put~$[n, n'] \= \emptyset$ when $n' < n$.

\subsection{Linear algebra}\label{ss:linear algebra}
Given a non-empty finite set~$V$, for each $v \in V$ we denote by $\vec{e}_v \in \R^V$ the vector having all of its coordinates equal to~$0$, except for the coordinate corresponding to~$v$ that is equal to~$1$.
Notice in particular that $\{ \vec{e}_v \mid v \in V \}$ is a base of~$\R^V$.
Furthermore we will denote by~$\Delta_{V}$ the \textit{unit simplex} in $\R^V$, which is defined as the (closed) convex hull of $\{ \vec{e}_v \mid v \in V \}$ in~$\R^V$, and by~$\| \cdot \|_1$ the norm on~$\R^V$ defined by $\| \sum_{v \in V}\alpha_v \vec{e}_v \|_1 = \sum_{v \in V} |\alpha_v|$.
Observe that~$\| \cdot \|_1$ is constant equal to~1 on~$\Delta_V$. 

Given non-empty finite sets $V, V'$ denote by $\cM_{V, V'}$ the group of matrices whose entries are real and indexed by $V \times V'$.
For a matrix $A \in \cM_{V, V'}$ we denote by $A^t$ the transpose of~$A$, and for $(v, v') \in V \times V'$ we denote by $A(v, v')$ the corresponding entry of~$A$, and by $A(\cdot, v')$ the corresponding column vector of~$A$.
Given column vectors $\{ \vec{x}_{v'} \mid v' \in V' \}$ in $\R^V$ we denote by $(\vec{x}_{v'})_{v' \in V'}$ the matrix in $\cM_{V, V'}$ whose column vector corresponding to the coordinate~$v'$ is equal to $\vec{x}_{v'}$.

We say that a matrix~$A$ is (left) \textit{stochastic} if all of its entries are non-negative and if the sum of all the entries in each column is equal to~1.
Observe that a stochastic matrix in $\cM_{V, V'}$ maps~$\Delta_{V'}$ into~$\Delta_V$, and that the product of stochastic matrices is stochastic.
\begin{lemm}\label{l:non expanding}
Let $V, V'$ be non-empty finite sets and let $A \in \cM_{V, V'}$ be a stochastic matrix.
Then for each $\vec{w}, \vec{w}' \in \Delta_V$ we have
$$ \| A(\vec{w}) - A(\vec{w}') \|_1 \le \| \vec{w} - \vec{w}' \|_1. $$
\end{lemm}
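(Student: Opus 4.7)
The plan is to reduce this to an elementary computation, using only the triangle inequality, the nonnegativity of the entries of~$A$, and the fact that each column of~$A$ sums to~$1$. (I read the statement as referring to $\vec{w}, \vec{w}' \in \Delta_{V'}$, so that $A(\vec{w})$ and $A(\vec{w}')$ lie in~$\Delta_V$ in accordance with the convention fixed just before the lemma; in any case the inequality is invariant under replacing the difference by an arbitrary vector.)

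First I would set $\vec{x} \= \vec{w} - \vec{w}'$ and expand it in the canonical basis as $\vec{x} = \sum_{v' \in V'} x_{v'} \vec{e}_{v'}$, so that $A(\vec{x}) = \sum_{v' \in V'} x_{v'} A(\cdot, v')$ and, for every $v \in V$,
$$ A(\vec{x})(v) \ = \ \sum_{v' \in V'} x_{v'} A(v, v'). $$
Since $A(v, v') \ge 0$ for every $(v, v') \in V \times V'$, the triangle inequality yields $|A(\vec{x})(v)| \le \sum_{v' \in V'} |x_{v'}| A(v, v')$.

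Summing over $v \in V$ and swapping the order of summation, I would conclude
$$ \|A(\vec{x})\|_1 \ = \ \sum_{v \in V} |A(\vec{x})(v)| \ \le \ \sum_{v' \in V'} |x_{v'}| \sum_{v \in V} A(v, v') \ = \ \sum_{v' \in V'} |x_{v'}| \ = \ \|\vec{x}\|_1, $$
where the second equality uses that each column of~$A$ sums to~$1$. This gives the stated inequality. There is no serious obstacle here; the only point worth noting is that the convexity hypothesis $\vec{w}, \vec{w}' \in \Delta_{V'}$ is not actually used, so the same argument shows that the map $\vec{x} \mapsto A(\vec{x})$ is $1$\nobreakdash-Lipschitz from $(\R^{V'}, \|\cdot\|_1)$ to $(\R^V, \|\cdot\|_1)$ for any stochastic matrix~$A$.
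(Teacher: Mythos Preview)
Your proof is correct and follows essentially the same approach as the paper: both expand $\vec{w}-\vec{w}'$ in the canonical basis, apply the triangle inequality, and use that each column of~$A$ has nonnegative entries summing to~$1$. The only cosmetic difference is that the paper applies the triangle inequality at the vector level (bounding $\|\sum_{v'} x_{v'} A(\vec{e}_{v'})\|_1$ by $\sum_{v'} |x_{v'}|\,\|A(\vec{e}_{v'})\|_1 = \sum_{v'} |x_{v'}|$), whereas you work coordinatewise; your remark that the simplex hypothesis is unnecessary and that the statement should read $\Delta_{V'}$ is also correct.
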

\begin{proof}
Putting $\vec{w} = ( w_v )_{v \in V}$ and $\vec{w}' = ( w_{v}' )_{v \in V'}$, we have
\begin{multline*}
\| A(\vec{w}) - A(\vec{w}') \|_1
\le
\sum_{v \in V} \| (w_v - w_v') A(\vec{e}_v) \|_1
=
\sum_{v \in V} |w_v - w_v'|
=
\| \vec{w} - \vec{w}' \|_1.
\end{multline*}
\end{proof}
\subsection{Choquet simplices}\label{ss:Choquet simplices}
A compact, convex, and metrizable subset~$\Choquet$ of a locally convex real vector space is said to be a (metrizable) \textit{Choquet simplex}, if for each $v \in \Choquet$ there is a unique probability measure~$\mu$ that is supported on the set of extreme points of~$\Choquet$, and such that $\int x d\mu(x) = v$.
See for example~\cite[{\S}II.3]{Alf71} for several characterizations of Choquet simplices.

In the proof of the Main Theorem we will make use of the following characterization of infinite dimensional metrizable Choquet simplices.
\begin{theo}[\cite{LazLin71}, Corollary, p.~186]
\label{t:Choquet as inverse limit}
Given an infinite dimensional Choquet simplex~$\Choquet$, for each~$n \in \N$ there is a surjective affine map $A_n : \Delta_{[0, n + 1]} \to \Delta_{[0, n]}$ such that $\varprojlim_{n} (\Delta_{[0, n + 1]}, A_n)$ is affine homeomorphic to~$\Choquet$.
\end{theo}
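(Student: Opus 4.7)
The plan is to prove this by passing to the dual picture: the space $A(\Choquet)$ of real-valued continuous affine functions on $\Choquet$, with pointwise ordering and the sup norm. Since $\Choquet$ is compact and metrizable, $A(\Choquet)$ is a separable Banach space with an order unit (the constant function $1$), and the natural embedding identifies $\Choquet$ with the state space of $A(\Choquet)$, that is, the set of positive linear functionals of norm one on $A(\Choquet)$, endowed with the weak$^*$ topology. A classical reformulation of the Choquet simplex property (due to Kadison, Edwards, etc.) says that $\Choquet$ is a simplex if and only if $A(\Choquet)$ is an \emph{$L_1$-predual}, i.e., $A(\Choquet)^*$ is isometric to an $L_1$-space; equivalently, $A(\Choquet)$ is a Lindenstrauss space.

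The heart of the argument is the finite-dimensional approximation of separable Lindenstrauss spaces. The first step would be to produce, for each $k \in \N$, a finite-dimensional subspace $E_k \subset A(\Choquet)$ containing the order unit, such that $E_k \subset E_{k+1}$, the union $\bigcup_k E_k$ is dense, and each $E_k$ is isometric (as an order-unit Banach space) to $\ell_\infty^{m_k}$ for some $m_k$. Assuming this, the dual of each $E_k$ is an $\ell_1^{m_k}$, whose positive unit ball intersected with the normalizing hyperplane is the standard $(m_k-1)$-simplex; the isometric inclusions $E_k \hookrightarrow E_{k+1}$ dualize to surjective stochastic maps $B_k : \Delta_{[1,m_{k+1}]} \to \Delta_{[1, m_k]}$. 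Since the inclusions are unital isometries of order-unit spaces, the duals are affine, and the dual of an inductive limit of Banach spaces is the inverse limit of the duals (in the weak$^*$ topology), so $\Choquet$, as the state space of $A(\Choquet) = \overline{\bigcup_k E_k}$, is affine homeomorphic to $\varprojlim_k (\Delta_{[1,m_k]}, B_k)$.

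To obtain precisely the indexing stated in the theorem, the final step is to refine the sequence $(m_k)$. As $\Choquet$ is infinite dimensional, we have $m_k \to \infty$, and by inserting intermediate steps we may assume $m_{k+1} = m_k + 1$, so that after reindexing we have simplices $\Delta_{[0,n]}$ with surjective affine maps $A_n : \Delta_{[0,n+1]} \to \Delta_{[0,n]}$. This refinement is routine: any surjective affine map $\Delta_{[0, p]} \to \Delta_{[0, q]}$ with $p > q+1$ can be factored through $\Delta_{[0, p-1]}, \Delta_{[0, p-2]}, \ldots, \Delta_{[0, q+1]}$ via surjective stochastic maps that identify (or merge) one extreme point at a time, which by Lemma~\ref{l:non expanding} preserves the inverse-limit structure.

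The main obstacle is the approximation step, namely that the dense increasing sequence $(E_k)$ of unital sub-$\ell_\infty$-spaces of $A(\Choquet)$ exists. This is the genuinely hard content of Lazar and Lindenstrauss's work: finite-dimensional subspaces of a Lindenstrauss space need not be $\ell_\infty^m$-spaces themselves, but they can always be slightly perturbed and enlarged to lie in one. Carrying this out rigorously requires the Michael-selection-type arguments and the local geometric structure theory of $L_1$-preduals; once it is in place, the rest of the proof reduces to formal categorical dualization and the elementary refinement described above.
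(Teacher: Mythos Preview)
The paper does not give its own proof of this theorem: it is quoted directly from Lazar--Lindenstrauss~\cite{LazLin71} as an external result, so there is no in-paper argument to compare against. Your outline is in fact a reasonable sketch of the Lazar--Lindenstrauss approach itself---pass to the ordered Banach space $A(\Choquet)$, invoke the $L_1$-predual characterisation of Choquet simplices, approximate by a nested sequence of unital $\ell_\infty^{m_k}$ subspaces, and dualise. You are also right that the genuinely nontrivial step is the existence of that nested $\ell_\infty$-filtration, and that everything else is formal.

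Two small points. First, your appeal to Lemma~\ref{l:non expanding} in the refinement step is misplaced: that lemma only says stochastic maps are $1$-Lipschitz for $\|\cdot\|_1$, which is irrelevant here. The reason inserting intermediate simplices does not change the inverse limit is purely categorical (cofinality of a refined inverse system), not metric. Second, in the refinement you should also say why one may arrange $m_1 = 2$ (so that the sequence starts at $\Delta_{[0,1]}$): this uses that $\Choquet$, being infinite dimensional, has at least two extreme points, so one can start the filtration with a two-dimensional $\ell_\infty$-subspace of $A(\Choquet)$ separating them. Neither point is a real gap, but both deserve a clean sentence rather than a hand-wave.
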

The following lemma is a simple consequence of the previous theorem.
We will say that a sequence~$(A_n)_{n \in \N}$ as in the theorem is \textit{normalized}, if for every $n \in \N$ and $j \in [0, n]$ we have $A_n(\vec{e}_j) = \vec{e}_{j}$.
\begin{lemm}
  \label{l:normalized Choquet}
For every infinite dimensional metrizable Choquet simplex~$\Choquet$ there is a normalized sequence of linear maps~$(A_n)_{n \in \N}$ such that~$\varprojlim_{n}(\Delta_{[0, n + 1]}, A_n)$ is affine homeomorphic to~$\Choquet$.
\end{lemm}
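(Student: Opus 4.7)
The plan is to start from Theorem~\ref{t:Choquet as inverse limit}, which supplies surjective affine maps $\tilde{A}_n : \Delta_{[0, n+1]} \to \Delta_{[0, n]}$ with $\varprojlim_n(\Delta_{[0, n+1]}, \tilde{A}_n)$ affine homeomorphic to $\Choquet$, and to conjugate each $\tilde{A}_n$ by affine bijections of the domain and codomain (induced by vertex permutations) so as to replace the sequence by a normalized one with the same inverse limit.

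The key structural observation is that for any surjective affine map $\tilde{A} : \Delta_{[0, n+1]} \to \Delta_{[0, n]}$, each of the $n+1$ vertices of $\Delta_{[0, n]}$ arises as the image of at least one vertex of $\Delta_{[0, n+1]}$. Indeed, by surjectivity, for each $i \in [0, n]$ there is $v = \sum_{k} \alpha_{k} \vec{e}_{k} \in \Delta_{[0, n+1]}$ with $\tilde{A}(v) = \vec{e}_{i}$, and then $\vec{e}_{i} = \sum_{k} \alpha_{k} \tilde{A}(\vec{e}_{k})$; extremality of $\vec{e}_{i}$ in $\Delta_{[0, n]}$ forces $\tilde{A}(\vec{e}_{k}) = \vec{e}_{i}$ for every $k$ with $\alpha_{k} > 0$. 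In particular, the fibers $\tilde{A}^{-1}(\vec{e}_{i}) \cap \{\vec{e}_{k}\}_{k \in [0, n+1]}$ are non-empty and pairwise disjoint, so one can select representatives $k_{i}$ for $i \in [0, n]$ that are automatically distinct.

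With this in hand, I would inductively construct permutations $\pi_n$ of $[0, n]$, and denote by $P_n : \Delta_{[0, n]} \to \Delta_{[0, n]}$ the affine bijection defined by $P_n(\vec{e}_{j}) = \vec{e}_{\pi_n(j)}$. Take $\pi_1 = \id$; given $\pi_n$, use the observation above to choose, for each $j \in [0, n]$, a vertex $\vec{e}_{k_j}$ of $\Delta_{[0, n+1]}$ with $\tilde{A}_n(\vec{e}_{k_j}) = \vec{e}_{\pi_n^{-1}(j)}$ (the $k_j$ being automatically distinct), and let $\pi_{n+1}$ be the permutation of $[0, n+1]$ with $\pi_{n+1}^{-1}(j) = k_j$ for $j \in [0, n]$ and $\pi_{n+1}^{-1}(n+1)$ the remaining element of $[0, n+1]$. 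Setting $A_n := P_n \circ \tilde{A}_n \circ P_{n+1}^{-1}$, a direct computation gives $A_n(\vec{e}_{j}) = \vec{e}_{j}$ for every $j \in [0, n]$, so $(A_n)_{n \in \N}$ is normalized, and the map $(v_n)_n \mapsto (P_n^{-1}(v_n))_n$ furnishes an affine homeomorphism between $\varprojlim_n(\Delta_{[0, n+1]}, A_n)$ and $\varprojlim_n(\Delta_{[0, n+1]}, \tilde{A}_n) \cong \Choquet$. The main obstacle, to the extent there is one, is the structural observation on vertex preservation, which really reflects the rigidity of simplices; once it is in place, the inductive construction and the identification of inverse limits are formal.
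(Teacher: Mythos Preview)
Your proposal is correct and follows essentially the same approach as the paper: both start from Theorem~\ref{t:Choquet as inverse limit}, inductively build vertex permutations using the fact that every extreme point of $\Delta_{[0,n]}$ is the image of some extreme point of $\Delta_{[0,n+1]}$, and conjugate by the induced affine bijections to obtain a normalized sequence with an isomorphic inverse limit. If anything, your version is slightly more explicit, since you justify the vertex-preservation fact via extremality and note that the selected preimages $k_j$ are automatically distinct, points the paper leaves implicit.
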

\begin{proof}
Let~$(\tA_n)_{n \in \N}$ be a sequence of affine maps given by Theorem~\ref{t:Choquet as inverse limit}.

For each~$n \in \N$ define a permutation~$\sigma_n$ of~$[0, n]$ by induction as follows.
Let~$\sigma_1$ be the identity, and suppose that for some~$n \in \N$ the permutation~$\sigma_n$ is already defined.
Since~$\tA_n$ maps~$\Delta_{[0, n + 1]}$ surjectively onto~$\Delta_{[0, n]}$, for each~$j \in [0, n]$ there is~$k \in [0, n + 1]$ such that~$\tA_n(\vec{e}_{k}) = \vec{e}_j$.
Equivalently, there is a map~$\iota_n : [0, n] \to [0, n + 1]$ such that for each $j \in [0, n]$ we have $\tA_n(\vec{e}_{\iota_n(j)}) = \vec{e}_j$.
Let~$\sigma_{n + 1}$ be the unique permutation of~$[0, n + 1]$ such that for each $j \in [0, n]$ we have $\sigma_{n + 1}(\iota_n(j)) = \sigma_n (j)$.

For each~$n$ let~$H_n : \R^{[0, n]} \to \R^{[0, n]}$ be the linear map so that for each~$j \in [0, n]$ we have~$H_n(\vec{e}_j) = \vec{e}_{\sigma_n(j)}$.
Then, by the definition of $(\sigma_n)_{n \in \N}$ it follows that for each $n \in \N$ the linear map $A_n \= H_n \circ \tA_n \circ H_{n + 1}^{-1}$ maps~$\Delta_{[0, n + 1]}$ surjectively onto~$\Delta_{[0, n]}$, and that for every~$j \in [0, n]$ we have $A_n(\vec{e}_{j}) = \vec{e}_j$.
Therefore the sequence of linear maps~$(A_n)_{n \in \N}$ is normalized, and $(H_n)_{n \in \N}$ induces a linear homeomorphism between $\varprojlim_{n} (\Delta_{[0, n + 1]}, \tA_n)$ and $\varprojlim_{n} (\Delta_{[0, n + 1]}, A_n)$.
\end{proof}
We end this section with the following general lemma.
\begin{lemm}\label{l:equivalence}
For each $n \in \N$ let $A_n, B_n : \R^{[0, n + 1]} \to \R^{[0, n]}$ be stochastic matrices such that
$$ \sum_{n \in \N} \sup \{ \| A_n(\vec{v}) - B_n(\vec{v}) \|_1 \mid  \vec{v} \in \Delta_{[0, n + 1]} \} < + \infty.
$$
Then the inverse limits $\varprojlim_n (\Delta_{[0, n + 1]}, A_n)$ and $\varprojlim_n (\Delta_{[0, n + 1]}, B_n)$ are affine homeomorphic.

In particular, if both~$(A_n)_{n \in \N}$ and~$(B_n)_{n \in \N}$ are normalized and
$$ \sum_{n \in \N} \| A_n(\vec{e}_{n + 1}) - B_n(\vec{e}_{n + 1}) \|_1 < + \infty, $$
Then the inverse limits $\varprojlim_n (\Delta_{[0, n + 1]}, A_n)$ and $\varprojlim_n (\Delta_{[0, n + 1]}, B_n)$ are affine homeomorphic.
\end{lemm}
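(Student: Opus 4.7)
The plan is to build mutually inverse affine continuous maps $\Phi : \varprojlim_n (\Delta_{[0,n+1]}, B_n) \to \varprojlim_n (\Delta_{[0,n+1]}, A_n)$ and $\Psi : \varprojlim_n (\Delta_{[0,n+1]}, A_n) \to \varprojlim_n (\Delta_{[0,n+1]}, B_n)$ by ``switching'' the bonding maps in the tail. Set $\varepsilon_n := \sup\{\|A_n(\vec{v}) - B_n(\vec{v})\|_1 : \vec{v} \in \Delta_{[0, n+1]}\}$; by hypothesis $\sum_n \varepsilon_n < +\infty$. For a sequence $(\vec{v}_n)_n \in \varprojlim_n (\Delta_{[0,n+1]}, B_n)$ I would define
\[
\Phi\bigl((\vec{v}_n)_n\bigr)_n \;:=\; \lim_{m \to \infty} A_{n+1} \circ A_{n+2} \circ \cdots \circ A_m(\vec{v}_m) \;\in\; \Delta_{[0, n+1]}.
\]

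To prove this limit exists, the key estimate is a telescoping decomposition. Fix $n < m < m'$, use $\vec{v}_m = B_{m+1} \circ \cdots \circ B_{m'}(\vec{v}_{m'})$, and for $k \in \{m, \dots, m'\}$ set the ``hybrid'' composition $C_k := A_{n+1} \circ \cdots \circ A_k \circ B_{k+1} \circ \cdots \circ B_{m'}$. Then $A_{n+1} \circ \cdots \circ A_{m'}(\vec{v}_{m'}) - A_{n+1} \circ \cdots \circ A_m(\vec{v}_m) = C_{m'}(\vec{v}_{m'}) - C_m(\vec{v}_{m'}) = \sum_{k=m+1}^{m'}(C_k - C_{k-1})(\vec{v}_{m'})$. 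Each summand $(C_k - C_{k-1})(\vec{v}_{m'})$ is the image under the stochastic composition $A_{n+1} \circ \cdots \circ A_{k-1}$ of the two simplex vectors $A_k(\vec{z}_k), B_k(\vec{z}_k) \in \Delta_{[0, k]}$, where $\vec{z}_k := B_{k+1} \circ \cdots \circ B_{m'}(\vec{v}_{m'}) \in \Delta_{[0, k+1]}$; iterating Lemma \ref{l:non expanding} gives the bound $\varepsilon_k$, so the total difference is bounded by $\sum_{k=m+1}^{m'} \varepsilon_k$. Summability makes the sequence Cauchy in the compact simplex $\Delta_{[0, n+1]}$, so the limit exists. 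Since this bound is uniform in the starting sequence, $\Phi$ is continuous; affinity is inherited from uniform limits of affine maps; and continuity of $A_{n+1}$ together with the defining formula give compatibility with the $A$-inverse system.

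Defining $\Psi$ symmetrically, the identity $\Psi \circ \Phi = \mathrm{id}$ follows from the same telescoping trick: writing $\Phi((\vec{v}_n)) = (\vec{w}_n)$, approximating $\vec{w}_m$ by $A_{m+1} \circ \cdots \circ A_p(\vec{v}_p)$, and using $\vec{v}_n = B_{n+1} \circ \cdots \circ B_p(\vec{v}_p)$ yields
\[
\|B_{n+1} \circ \cdots \circ B_m(\vec{w}_m) - \vec{v}_n\|_1 \;\leq\; \sum_{k > m} \varepsilon_k,
\]
which tends to $0$ as $m \to \infty$; hence $\Psi((\vec{w}_n))_n = \vec{v}_n$. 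The reverse identity holds by symmetry, so $\Phi$ is the desired affine homeomorphism. For the ``in particular'' clause, normalization of $A_n$ and $B_n$ implies that for every $\vec{v} = \sum_{j=0}^{n+1} v_j \vec{e}_j \in \Delta_{[0, n+1]}$ one has $A_n(\vec{v}) - B_n(\vec{v}) = v_{n+1}\bigl(A_n(\vec{e}_{n+1}) - B_n(\vec{e}_{n+1})\bigr)$, so $\varepsilon_n \leq \|A_n(\vec{e}_{n+1}) - B_n(\vec{e}_{n+1})\|_1$, and the first part applies.

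The conceptual content is short --- non-expansion of stochastic maps plus telescoping --- but the subtlety is purely in the bookkeeping: Lemma \ref{l:non expanding} applies only to differences of points both lying in a common simplex. The reason for introducing the hybrid compositions $C_k$ rather than bounding $\|A_{m+1} \circ \cdots \circ A_{m'} - B_{m+1} \circ \cdots \circ B_{m'}\|$ by a crude operator-norm expansion is precisely to ensure that at every replacement step one compares $A_k(\vec{z}_k)$ with $B_k(\vec{z}_k)$ for a genuine simplex point $\vec{z}_k$, so that the non-expanding lemma can be chained without ever having to estimate the action of a stochastic matrix on an arbitrary signed vector.
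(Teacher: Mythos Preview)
Your proof is correct and follows essentially the same approach as the paper: both construct the homeomorphism as a limit of ``tail-switched'' compositions, establish convergence by a telescoping decomposition into hybrid products $A_{n+1}\cdots A_k B_{k+1}\cdots B_{m'}$ (the paper writes the same recursion one step at a time), and bound each replacement via the non-expansion Lemma~\ref{l:non expanding}; the inverse is then defined symmetrically and the composition is shown to be the identity by the same estimate. The only cosmetic difference is that the paper first compares $B_n\cdots B_m \vec{x}_{m+1}$ to the original coordinate $\vec{x}_n = A_n\cdots A_m \vec{x}_{m+1}$ to obtain Cauchyness, whereas you compare two stages $m < m'$ directly---but the underlying inequality and its derivation are identical.
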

\begin{proof}
Let $x \= (\vec{x}_n)_{n\in \N}\in \varprojlim_{n} (\Delta_{[0,n+1]},A_n).$
For each $n, m \in\N$ such that $m\geq n$ define
$$
\vec{x}_{n,m} \= B_n\cdots B_m \vec{x}_{m+1} \in \Delta_{[0,n]}.
$$
In particular we have $\vec{x}_{n, n} = \vec{x}_n$.

When $m > 0$ we have
\begin{eqnarray*}
\|\vec{x}_n-\vec{x}_{n,m}\|_1 & = & \|A_n\cdots A_m\vec{x}_{m+1}- B_n\cdots B_m
\vec{x}_{m+1}\|_1\\
  & \leq & \|A_n\cdots A_m\vec{x}_{m+1}-B_nA_{n+1}\cdots A_m\vec{x}_{m+1}\|_1\\
  & & +
  \|B_nA_{n+1}\cdots A_m\vec{x}_{m+1}-B_n\cdots B_m
\vec{x}_{m+1}\|_1.
\end{eqnarray*}
From Lemma~\ref{l:non expanding} we get
$$
 \|B_nA_{n+1}\cdots A_m\vec{x}_{m+1}-B_n\cdots B_m\vec{x}_{m+1}\|_1\leq \|A_{n+1}\cdots
 A_m\vec{x}_{m+1}-B_{n+1}\cdots B_m\vec{x}_{m+1}\|_1,
$$
and  since
$$
\|A_n\cdots A_m\vec{x}_{m+1}-B_nA_{n+1}\cdots A_m\vec{x}_{m+1}\|_1\leq
\sup\{v\in \Delta_{[0,n+1]}: \|A_nv-B_nv\|_1\},
$$
we deduce, after  an induction argument, that
\begin{eqnarray}
\label{eq3} \|\vec{x}_n-\vec{x}_{n,m}\|_1
\leq
\sum_{k=n}^m\sup\{\|A_kv-B_kv\|_1 \mid v\in \Delta_{[0,k+1]} \}.
\end{eqnarray}
By hypothesis and from equation (\ref{eq3}), we deduce that for a fixed~$n \in \N$, sequence $(\vec{x}_{n,m})_{m \geq n}$ is a Cauchy sequence in~$\Delta_{[0,n]}$.
We denote by~$H_n(x)$ its limit.
Observe that for each~$n \in \N$, the sequence $(B_n\vec{x}_{n+1,m})_{m \geq n+1}$ converges to both $B_nH_{n + 1}(x)$ and $H_n(x)$.
This implies that $(H_n(x))_{n\in \N}\in \varprojlim_{n}(\Delta_{[0,n+1]},B_n).$
Thus the transformation
$$H : \varprojlim_{n}(\Delta_{[0,n+1]},A_n) \to \varprojlim_{n}(\Delta_{[0,n+1]},B_n)$$ given by
$H(x)=(H_n(x))_{n \in \N}$ is well defined.
This map is clearly affine.
We will show that~$H$ is a homeomorphism.

In order to verify that~$H$ is continuous, we just need to show that for each~$m \in \N$ the map~$H_m$ is continuous.
As~$\varprojlim_{n} (\Delta_{[0, n + 1]}, A_n)$ is metrizable we just need to show that~$H_m$ is sequentially continuous.
To do this, fix~$\varepsilon > 0$ and consider a sequence
$x^{(n)} \= ((\vec{x}_m^{(n)})_{m\in\N})_{n\in\N}$ in $\varprojlim_{n}(\Delta_{[0,n+1]},A_n)$ that converges to $x = (\vec{x}_n)_{n\in\N}$.
For each $n, m, k \in \N$ such that $k \ge m$ we have
\begin{eqnarray*}
\| H_m(x^{(n)}) - H_m(x) \|_1
& \leq &
\|H_m(x^{(n)}) - B_m\cdots B_k \vec{x}^{(n)}_{k+1}\|_1
\\ && +
\|B_m\cdots B_k\vec{x}^{(n)}_{k+1} - B_m\cdots B_k \vec{x}_{k+1}\|_1
\\ && +
\|B_m \cdots B_k \vec{x}_{k+1} - H_m(x)\|_1
\\ &\leq &
\|H_m(x^{(n)})-B_m\cdots B_k \vec{x}^{(n)}_{k+1}\|_1
\\ && +
\|\vec{x}^{(n)}_{k+1}-\vec{x}_{k+1}\|_1
\\ && +
\|B_m\cdots B_k\vec{x}_{k+1}-H(\vec{x}_m)\|_1.
\end{eqnarray*}
We choose~$k$ sufficiently large so that
$$
\| H_m(x^{(n)}) - B_m\cdots B_k \vec{x}^{(n)}_{k+1}\|_1,
\| B_m\cdots B_k \vec{x}_{k+1} - H_m(x)\|_1
\leq \frac{\varepsilon}{3},
 $$
and~$n$ such that  $\|x^{(n)}_{k+1}-\vec{x}_{k+1}\|_1\leq \varepsilon/3$.
We get
$$
\|H_m(x^{(n)})-H_m(x)\|_1 \leq \varepsilon.
$$
This shows the continuity of~$H_m$, and hence that of~$H$.

To show that~$H$ is a homeomorphism we define in a similar way,
$$
L : \varprojlim_{n}(\Delta_{[0,n+1]},B_n) \to \varprojlim_{n}(\Delta_{[0,n+1]},A_n).$$
Observe that by Lemma~\ref{l:non expanding} we get
\begin{eqnarray*}
\|L_n(H_n(x)) - x \|_1
& \leq &
\|L_n(H_n(x)) - A_n\cdots A_mH_{m + 1}(x) \|_1
\\ && +
\|A_n \cdots A_m H_{m + 1}(x) - A_n \cdots A_m \vec{x}_{m+1} \|_1
\\ & \leq &
\| L_n(H_n(x)) - A_n\cdots A_m H_{m + 1}(x) \|_1
\\ && +
\| H_{m + 1}(x) - \vec{x}_{m+1} \|_1
\end{eqnarray*}
By definition of~$L$, we have $\lim_{m\to \infty}\|L_n(H_n(x)) - A_n\cdots A_mH_{m + 1}(x) \|_1 = 0,$  and from~\eqref{eq3} we have $\lim_{m\to
\infty}\| H_{m + 1}(x) - \vec{x}_{m+1}\|_1=0$.
Thus we conclude that~$L$ and~$H$ are inverse of each other.
\end{proof}
\subsection{Unimodal maps, cutting times and the kneading map}\label{ss:unimodal}
A continuous map $f:[0,1]\to [0,1]$ is \textit{unimodal} if~$f(0) = f(1) = 0$, and if there exists a point $c \in [0,1]$ such that~$f$ is strictly increasing on~$[0, c]$, and strictly decreasing on~$[c, 1]$.
The point~$c$ is called the \textit{turning} or \textit{critical point} of~$f$.
For each $\parameter \in (0, 4]$ the logistic map~$f_\parameter$ is a unimodal with critical point~$x = \tfrac{1}{2}$.

Let~$f$ be a unimodal map with critical point~$c$.
The $\omega$\nobreakdash-limit of~$c$ will be called the \textit{\pcs{}} of~$f$.
When either $f(c) \le c$ or $f^2(c) \ge c$, it is easy to see that the \pcs{} of~$f$ reduces to a single point.
We will thus (implicitly) assume from now on that for each unimodal map~$f$ that we consider we have $f^2(c) < c < f(c)$.

To describe the dynamics of a unimodal map~$f$ on its \pcs, we will make the following definitions.
Let~$c$ be the critical point of~$f$ and for each $n \ge 1$ put $c_n = f^n(c)$.
Define the sequence of compact intervals $(D_n)_{n \ge 1}$ inductively by $D_1 = [c, c_1]$, and for each $n \ge 2$, by
$$
D_{n} =
\begin{cases}
f(D_{n - 1}) & \text{if } c \not \in D_{n - 1}, \\
[c_{n}, c_1] & \text{otherwise}.
\end{cases}
$$
An integer $n \ge 1$ will be called a \textit{cutting time} if $c \in D_n$.
We will denote by $(S_k)_{k \ge 0}$ the sequence of all cutting times.
From our assumption that $f^2(c) < c < f(c)$ it follows that $S_0 = 1$ and $S_1 = 2$.

It can be shown that if~$S$ and~$S'> S$ are consecutive cutting times, then $S' - S$ is again a cutting time, and that this cutting time is less than or equal to~$S$ when~$f$ has no periodic attractors, see for example~\cite{Bru95,Hof80}.
That is, if~$f$ has no periodic attractors then for each $k \ge 1$ there is a non-negative integer~$Q(k)$, such that $Q(k) \le k - 1$, and
$$ S_k - S_{k - 1} = S_{Q(k)}. $$
Putting $Q(0) = 0$, the function $Q : \N_0 \to \N_0$ so defined is called the \textit{kneading map} of~$f$.
It follows from the recursion formula above, and from $S_0 = 1$, that the sequence $(S_k)_{k \ge 0}$ of cutting times is determined by~$Q$.

We will say that a function $Q : \N_0 \to \N_0$ is a \textit{kneading map} if there is a unimodal map~$f$ with critical point~$c$, such that $f^2(c) < c < f(c)$, such that~$f$ has no periodic attractors and such that the kneading map of~$f$ is equal to~$Q$.
If we denote by~~$\succeq$ the lexicographical ordering in~$\N_0^{\N_0}$, then a function $Q : \N_0 \to \N_0$ is a kneading map if and only if $Q(0) = 0$, for each $k \ge 1$ we have $Q(k) \le k - 1$, and if for each $k \ge 1$ we have
\begin{equation}
\label{admisible}
\{Q(k+j)\}_{j\geq 1}
\succeq
\{Q(Q (Q(k)) + j)\}_{j\geq 1},
\end{equation}
see~\cite{Bru95,Hof80}.
Notice in particular that, if $Q : \N_0 \to \N_0$ is non-decreasing, $Q(0) = 0$ and for each $k \ge 1$ we have $Q(k) \le k - 1$, then~$Q$ is a kneading map.

We will need the following well known facts, see for example the proof of~\cite[Proposition~4]{CorRiv0804} for precise references.
\begin{prop}\label{p:continuous unimodal}
Let~$f$ be a unimodal map whose kneading map diverges to~$+\infty$.
Then the \pcs{} of~$f$ is a Cantor set, and the restriction of~$f$ to this set is minimal and has zero topological entropy.
Furthermore, if~$\widehat{f}$ is a unimodal map having the same kneading map as~$f$, then the space of invariant probability measures of~$\widehat{f}$ supported on the \pcs{} of~$\widehat{f}$ is affine homeomorphic to that of~$f$.
\end{prop}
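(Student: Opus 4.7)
The plan is to extract the dynamics on the post-critical set entirely from combinatorial data encoded by the cutting times $(S_k)_{k \ge 0}$, which depend only on the kneading map $Q$. Once this is done, the topological statements (Cantor, minimal, zero entropy) follow from standard consequences of $Q(k) \to +\infty$, and the statement about invariant measures will follow by producing a topological conjugacy on the post-critical sets of any two unimodal maps sharing the kneading map $Q$.

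First I would analyze the sequence of intervals $(D_n)_{n \ge 1}$ and extract from them the ``closest return intervals'' around $c$. Since consecutive cutting times satisfy $S_k - S_{k-1} = S_{Q(k)}$, the intervals $D_{S_k}$ are precisely those elements of the sequence containing $c$, and they form a nested decreasing family. The hypothesis $Q(k) \to +\infty$ forces $S_k - S_{k-1} = S_{Q(k)} \to +\infty$, so the returns of the critical orbit to $c$ occur with unboundedly growing times. Standard arguments (see \cite{Bru95,BruKelsPi97}) then show that $\bigcap_k \overline{D_{S_k}} = \{c\}$, that $\omega(c)$ is perfect (it contains no isolated point because each point of the critical orbit is a limit of later iterates via the deep returns), and that $\omega(c)$ has empty interior (it is contained in the boundary of a Cantor-like nested family of ``generalized central branches''); hence $\omega(c)$ is a Cantor set. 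Minimality follows from the fact that on $\omega(c)$ the map $f$ is topologically conjugate (at the level of a Borel quotient) to the generalized odometer associated to $Q$ recalled in \S\ref{ss:kneading odometer}, which is minimal; zero topological entropy likewise follows because generalized odometers are equicontinuous extensions and so have zero entropy.

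For the last claim, let $\widehat{f}$ have the same kneading map $Q$ as $f$, with critical point $\widehat{c}$ and post-critical orbit $(\widehat{c}_n)_{n \ge 1}$. The key observation is that the cyclic order in which the points $c_1, c_2, \dots, c_n$ appear on $[0, 1]$ is determined by the kneading map (it is readable off the cutting times and the sign pattern of $f^j$ on each $D_i$), so it coincides with the order of $\widehat{c}_1, \dots, \widehat{c}_n$. Therefore the map $h_0 : \{c_n\}_{n \ge 1} \to \{\widehat{c}_n\}_{n \ge 1}$ defined by $h_0(c_n) = \widehat{c}_n$ is monotone on each $D_{S_k}$ and compatible with the nested structure; since $\omega(c)$ and $\omega(\widehat{c})$ are Cantor sets that are the closures (of the non-periodic tails) of these orbits, $h_0$ extends to a homeomorphism $h : \omega(c) \to \omega(\widehat{c})$ satisfying $h \circ f = \widehat{f} \circ h$. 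The push-forward $\mu \mapsto h_* \mu$ then yields the required affine homeomorphism between the two spaces of invariant probability measures supported on the post-critical sets.

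The main obstacle is verifying that the abstract combinatorics really encodes the geometric ordering faithfully enough for $h_0$ to extend continuously; this requires controlling how the images $f^j(D_{S_k})$ sit inside the $D_i$ and in particular requires the admissibility condition~(\ref{admisible}). In practice one invokes the symbolic/kneading-theoretic machinery developed by Hofbauer and Bruin (as assembled in \cite[Proposition~4]{CorRiv0804}), where all of these statements are deduced uniformly from the hypothesis $Q(k) \to +\infty$.
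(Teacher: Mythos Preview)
The paper does not actually prove this proposition: it merely records the facts as well known and points to the proof of \cite[Proposition~4]{CorRiv0804} for precise references. Your proposal ultimately lands in the same place, invoking that same reference, so the two are aligned.

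Your sketch is more detailed than what the paper offers, and the outline is essentially right. Two minor points are worth flagging. First, the phrase ``generalized odometers are equicontinuous extensions'' is not accurate in general; the zero-entropy claim is correct but is usually obtained either directly for $f|_{\omega(c)}$ via the Hofbauer tower / cutting-time structure, or by observing that the factor map from $(\Omega_Q,T_Q)$ to $(\omega(c),f)$ bounds the entropy of the latter by that of the former, and the former has zero entropy by its Bratteli--Vershik description. Second, for the last assertion your direct construction of a conjugacy $h:\omega(c)\to\omega(\widehat c)$ via the order of the critical orbit is the standard route and is valid (the kneading map determines the itinerary of $c$, hence the relative order of the $c_n$), but note that the paper's later arguments do \emph{not} rely on this conjugacy: for its purposes the link between $\omega(c)$ and $\Omega_Q$ is only a factor map, and the affine homeomorphism of measure spaces is obtained by showing $\pi$ is injective off a countable invariant set (Proposition~\ref{p:measure isomorphism}). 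Either route suffices for the statement here.
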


\section{Reduced statement}\label{s:doubly resonant kneading}
The purpose of this section is to prove the Main Theorem assuming a result we state as Theorem~\ref{t:invariant of doubly resonant}.
We start introducing a class of kneading maps in~\S\ref{ss:doubly resonant kneading}.
In~\S\ref{ss:proof of Main Theorem} we state a result describing the space of invariant probability measures supported on the \pcs{} of a unimodal map with a kneading map in this class (Theorem~\ref{t:invariant of doubly resonant}).
In~\S\ref{ss:proof of Main Theorem} we also give a proof of the Main Theorem assuming Theorem~\ref{t:invariant of doubly resonant}.
\subsection{Kneading maps}\label{ss:doubly resonant kneading}
Throughout the rest of this paper we denote by $(r_n)_{n \in \N_0}$ the sequence of integers defined by $r_n = \frac{(n + 1)(n + 2)}{2}$.
Note that for each~$n \in \N$ we have $r_n = r_{n - 1} + n + 1$.

For each $n \in \N$ let $\vec{a}_n \= (a_{n, 0}, \ldots, a_{n, n}) \in \N^{[0, n]}$ be given and let $q \= (q_r)_{r \ge 0}$ be an increasing sequence of integers such that $q_0 = 0$, and such that for each $n \in \N$ we have,
\begin{equation}\label{e:jump}
q_{r_n} - q_{r_n - 1} = a_{n, 0} + \cdots + a_{n, n}.
\end{equation}
If we put $a \= (\vec{a}_n)_{n \in \N}$, then we will define a kneading map~$\kneading : \N_0 \to \N_0$ as follows.
For each $n \in \N_0$ put
$$ \lefti_n = [q_{r_n - 1} + 1, q_{r_n}]
\text{ and }
\righti_n = [q_{r_{n}} + 1, q_{r_{n + 1} - 1}],
$$
and note that $\{ \lefti_n, \righti_n \mid n \in \N \}$ is a partition of~$\N$.
Furthermore, for each $n \in \N$ and $m \in [0, n]$ define
$$ \lefti_{n, m} =
\left[ \left( q_{r_n - 1} + 1 + \sum_{i = 0}^{m - 1} a_{n, i} \right), \left( q_{r_n - 1} + \sum_{i = 0}^{m} a_{n, i} \right)\right]
$$
and
$$
\righti_{n, m} =
[q_{r_n + m} + 1, q_{r_n + m + 1}].
$$
By~\eqref{e:jump} the collection $\{ \lefti_{n, m} \mid m \in [0, n] \}$ is a partition of~$\lefti_{n}$.
Since for each $n \in \N$ we have $r_{n + 1} = r_n + n + 2$, the collection $\{ \righti_{n, m} \mid m \in [0, n] \}$ is a partition of~$\righti_{n}$.

With these notations we put
$$ \kneading
\=
\sum_{n \in \N} \sum_{m = 0}^n q_{r_{n - 1} + m} \left( \pmb{1}_{\lefti_{n, m}} + \pmb{1}_{\righti_{n, m}} \right). $$
Note that $\kneading^{-1}(0) = [0, q_2]$, $\kneading(\N_0) = \{ q_r \mid r \in \N_0 \}$ and that for each $n \in \N$ we have
\begin{multline*}
\kneading(\lefti_n)
=
\kneading(\righti_n)
= \{ q_{r_{n - 1}}, q_{r_{n -1} + 1}, \ldots, q_{r_{n - 1} + n} \}
\\ \subset
\lefti_{n - 1} \cup \righti_{n - 1}.
\end{multline*}
\begin{lemm}\label{l:doubly resonant kneading}
For $a \= (\vec{a}_n)_{n \in \N}$, $q = (q_r)_{r \in \N_0}$ and~$\kneading$ as above, the following properties hold.
\begin{enumerate}
\item[1.]
For each $k \in \N_0$ we have $\kneading(k) \le \max \{ 0, k - 2 \}$.
\item[2.]
The function~$\kneading$ is a kneading map and for every $k \ge q_5 + 1$ we have
\begin{equation}
  \label{e:almost admissibility}
  \kneading(k + 1) \ge \kneading( \kneading(\kneading(k)) + 1) + 2.
\end{equation}
\end{enumerate}
\end{lemm}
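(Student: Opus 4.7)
My plan is to treat the two parts separately via case analysis on the partition of $\N_0$ consisting of $\{[0, q_2]\}$ together with the intervals $\lefti_{n, m}$ and $\righti_{n, m}$ for $n \in \N$, $m \in [0, n]$. Part~1 reduces to three easy cases. When $k \in [0, q_2]$ the bound $\kneading(k) = 0 \le \max\{0, k - 2\}$ is immediate. When $k \in \lefti_{n, m}$ with $n \ge 1$, the smallest element of the interval is $q_{r_n - 1} + 1 + \sum_{i = 0}^{m - 1} a_{n, i}$ while $\kneading(k) = q_{r_{n - 1} + m}$; since $r_{n - 1} + m \le r_n - 1$ and $(q_r)$ is strictly increasing, the positivity of the $a_{n, i}$ yields $\kneading(k) \le k - 2$ after a short calculation handling the cases $m = 0$ and $m \ge 1$ separately. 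When $k \in \righti_{n, m}$, the smallest element is $q_{r_n + m} + 1$ and the bound follows at once from $q_{r_{n - 1} + m} < q_{r_n + m}$.

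For Part~2, the conditions $\kneading(0) = 0$ and $\kneading(k) \le k - 1$ are immediate from the definition and Part~1, so the substantive task is the admissibility condition~(\ref{admisible}). My plan is to first establish the stronger pointwise inequality~(\ref{e:almost admissibility}) for all $k \ge q_5 + 1$; then~(\ref{admisible}) holds automatically for such $k$ because the $j = 1$ entries on its two sides already differ strictly. For $k \in [1, q_5]$ I would verify admissibility by direct computation, reading off the explicit values of $\kneading$ on a finite initial segment of $\N_0$ from the definition.

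The central argument is thus the proof of~(\ref{e:almost admissibility}). Given $k \ge q_5 + 1$ lying in some $\lefti_{n, m}$ or $\righti_{n, m}$ (necessarily with $n \ge 2$), we have $\kneading(k) = q_{r_{n - 1} + m}$. The key structural observation is that $q_{r_{n - 1} + m}$ is the right endpoint of $\lefti_{n - 1, n - 1}$ when $m = 0$ and of $\righti_{n - 1, m - 1}$ when $m \ge 1$; iterating this observation lets me compute $\kneading(\kneading(k))$ and then $\kneading(\kneading(\kneading(k)))$ explicitly. On the other side, $\kneading(k + 1) = \kneading(k)$ unless $k$ is the right endpoint of its subinterval, in which case $\kneading(k + 1)$ is the successor of $\kneading(k)$ in the increasing run $q_{r_{n - 1}}, q_{r_{n - 1} + 1}, \ldots, q_{r_n - 1}$ of values taken by $\kneading$ on $\lefti_n$ (and on $\righti_n$). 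Tracking indices through the three-step recursion, the required gap of two will emerge from the fact that $\kneading(\kneading(k)) + 1$ lies two ``levels'' deeper in the nested partition than $k + 1$, combined with strict monotonicity of $(q_r)$.

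The main obstacle I anticipate is the sub-case bookkeeping in the proof of~(\ref{e:almost admissibility}): the argument must split according to whether $k \in \lefti_{n, m}$ or $k \in \righti_{n, m}$, whether $m = 0$ or $m \ge 1$, and whether $k$ is the right endpoint of its subinterval so that $k + 1$ crosses a boundary, possibly into a different $\lefti_{n, m'}$, $\righti_{n, m'}$, or $\lefti_{n + 1, 0}$. I would organize the work systematically by the triple $(n, m, L/R)$, in each case computing $\kneading^{(i)}(k)$ for $i = 1, 2, 3$ together with $\kneading(\kneading(\kneading(k)) + 1)$, and verifying the gap of two directly; the lower bound $k \ge q_5 + 1$ should guarantee that we are deep enough in the construction to avoid any pathological interaction of the iterated images with the initial interval $[0, q_2]$.
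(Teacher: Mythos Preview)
Your approach to Part~1 is essentially the same as the paper's, just organized by the finer partition $\{I_{n,m}, J_{n,m}\}$ rather than by $I_n \cup J_n$; both are correct and equally short.

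For Part~2 your plan is sound, but the paper's argument for~\eqref{e:almost admissibility} is far less work than the case analysis you propose. Instead of tracking the exact values of the iterates through the sub-cases $(n,m,L/R)$ and the boundary crossings of $k+1$, the paper uses only coarse two-sided bounds. Fix $n \ge 2$ with $k+1 \in I_n \cup J_n$. Then on the one hand $Q(k+1) \ge q_{r_{n-1}}$, the minimum value of $Q$ on $I_n \cup J_n$. On the other hand $Q(k) \le q_{r_{n-1}+n} = q_{r_n - 1}$, so $Q(k) \in [0, q_{r_n-1}]$; the maximum of $Q$ on this interval is $q_{r_{n-2}+n-1} = q_{r_{n-1}-1}$, so $Q(Q(k)) + 1 \le q_{r_{n-1}-1}+1$; the maximum of $Q$ on $[0, q_{r_{n-1}-1}+1]$ is $q_{r_{n-2}}$, so $Q(Q(Q(k))+1) \le q_{r_{n-2}}$. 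Since $q$ is strictly increasing, $q_{r_{n-1}} - q_{r_{n-2}} \ge r_{n-1} - r_{n-2} = n \ge 2$, which yields the inequality immediately. The detailed bookkeeping you anticipate is unnecessary: you never need to know which $I_{n,m}$ or $J_{n,m}$ contains $k$ or $k+1$, whether $k$ is a right endpoint, or where exactly $Q(Q(k))+1$ lands. For $k \le q_5$ the paper's verification is also simpler than an explicit read-off: one just observes that $Q(Q(k)) = 0$ for every such $k$, so the right-hand side of~\eqref{admisible} is identically zero and admissibility reduces to checking that $Q(q_2+1) = q_1 > 0$.
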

\begin{proof}
Put~$Q \= \kneading$.

\partn{1}
For~$k \in [0, q_2]$ we have $Q(k) = 0$ so the inequality is satisfied in this case.
Let~$k \in \N$ be such that $k \ge q_2 + 1$, so there is~$n \in \N$ such that~$k \in \lefti_n \cup \righti_n = [q_{r_n - 1} + 1, q_{r_{n + 1} - 1}]$.
Thus~$Q(k) \le q_{r_{n - 1} + n} = q_{r_n - 1}$, so when~$k \neq q_{r_n - 1} + 1$ we have~$Q(k) \le k - 2$.
Finally observe that
$$ Q(q_{r_n - 1} + 1) = q_{r_{n - 1}} \le q_{r_{n - 1} + n} - n \le q_{r_n - 1} -1. $$

\partn{2}
Let~$k \in \N$ be such that~$k \ge q_5$, so there is $n \ge 2$ such that~$k + 1 \in \lefti_n \cup \righti_n$.
Then~$Q(k + 1) \ge q_{r_{n - 1}}$, $Q(k) \le q_{r_{n - 1} + n}$ and therefore we have $Q(Q(k)) \le q_{r_{n - 2} + n - 1}$ and
\begin{multline*}
Q(Q(Q(k)) + 1)
\le
q_{r_{n - 2}}
\le
q_{r_{n - 1}} - (r_{n - 1} - r_{n - 2})
\\ \le
q_{r_{n - 1}} - 2
\le
Q(k + 1) - 2.
\end{multline*}

In view of part~1 and the previous inequality, to show that~$Q$ is admissible we just need to show that for each ~$k \in [1, q_5 - 1]$ and $j \in [1, q_2 + 1 - k]$ we have
\begin{equation}
  \label{e:order}
  Q(k + j) \ge Q(Q(Q(k)) + j),
\end{equation}
with strict inequality when~$j = q_2 + 1 - k$.
In fact, for each $k \in [1, q_5 - 1]$ we have $Q(Q(k)) = 0$, so for each $j \in [1, q_2 + 1 - k]$, we have $Q(Q(Q(k)) + j) = 0$ and~\eqref{e:order} is satisfied.
When~$j = q_2 + 1 - k$ we have $Q(k + j) = Q(q_2 + 1) = q_1 > 0$, so inequality~\eqref{e:order} is strict in this case.
\end{proof}
\subsection{Reduced statement}
\label{ss:proof of Main Theorem}
The purpose of this section is to give a proof of the Main Theorem assuming the following one.
\begin{theoalph}\label{t:invariant of doubly resonant}
For each $n \in \N$ let $\vec{a}_n \in \N^{[0, n]}$ be given and put $a \= (\vec{a}_n)_{n \in \N}$.
Furthermore, let $q \= (q_r)_{r \in \N_0}$ be a strictly increasing sequence of integers such that~$q_0 = 0$ and such that for each $n \in \N$ we have
$$ q_{r_n} - q_{r_n - 1} = a_{n, 0} + \cdots + a_{n, n}, $$
and let $\kneading$ be the corresponding kneading map.
Define $(S_k)_{k \in \N_0}$ recursively by $S_0 = 1$ and $S_k = S_{k - 1} + S_{\kneading(k)}$, and assume that
\begin{equation}\label{e:positive determinant}
\sum_{r \in \N \setminus \{ r_n \mid n \in \N \}} \frac{S_{q_{r - 1}}}{S_{q_{r}}} < + \infty.
\end{equation}
Moreover, for each $n \in \N$ let $\Xi_n : \R^{[0, n + 1]} \to \R^{[0, n]}$ be the stochastic matrix such that for each $m \in [0, n]$ we have $\Xi_n(\vec{e}_m) = \vec{e}_{m}$ and such that
$$
\Xi_n(0, n + 1)
\=
\frac{S_{q_{r_{n} - 1}}}{S_{q_{r_{n}}}}(1 + a_{n, n})\vec{e}_0
+
\sum_{m = 1}^{n} \frac{S_{q_{r_{n - 1} + n - m}}}{S_{q_{r_{n}}}} a_{n, n - m} \vec{e}_m.
$$

Then for each unimodal map~$f$ whose kneading map is equal to~$\kneading$, the \pcs{} of~$f$ is a Cantor set, $f$ is minimal on this set, and the space of invariant probability measures of~$f$ supported on this set is affine homeomorphic to $\varprojlim_{n} (\Delta_{[0, n + 1]}, \Xi_n)$.
\end{theoalph}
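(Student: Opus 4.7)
The plan is to follow the reduction strategy laid out in the organization description: first identify the dynamics on the post-critical set with a combinatorial system whose invariant measure space can be written as an explicit inverse limit, then compare that inverse limit to $\varprojlim_n(\Delta_{[0, n + 1]}, \Xi_n)$ using Lemma~\ref{l:equivalence}.

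First I would show that $\kneading$ diverges to~$+\infty$. By construction, for each $n \in \N$ and each $k \in \lefti_n \cup \righti_n$ we have $\kneading(k) \ge q_{r_{n - 1}}$, and since $(q_r)_{r \in \N_0}$ is strictly increasing, $q_{r_{n - 1}} \to +\infty$ as $n \to +\infty$. By part~2 of Lemma~\ref{l:doubly resonant kneading} the map $\kneading$ is admissible, so Proposition~\ref{p:continuous unimodal} applies: for any unimodal $f$ with kneading map $\kneading$ the post-critical set is a Cantor set on which $f$ is minimal, and the space of invariant probability measures on this set depends only on $\kneading$, not on the particular choice of~$f$. This reduces the problem to computing a single such invariant measure space.

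Second, I would invoke the two reductions developed later in the paper: the affine homeomorphism between the space of invariant probability measures supported on the post-critical set of~$f$ and the space of invariant probability measures of the generalized odometer associated to~$\kneading$ (\S\ref{ss:measure isomorphism}), followed by the affine homeomorphism (Theorem~\ref{t:reduction to Bratteli}) between the latter and the invariant measure space of the Bratteli--Vershik system associated to~$\kneading$. The advantage of the Bratteli--Vershik side is that it is presented as an inverse limit $\varprojlim_r(\Delta_{V_r}, M_r)$ of stochastic matrices $M_r$, which will be computed explicitly in \S\ref{ss:transition matrices}.

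Third, I would package the levels~$r$ of the Bratteli diagram into blocks indexed by~$n$ so as to match the indexing $[0, n + 1]$ appearing in the statement: at the end of the $n$-th block the vertex set should identify with $[0, n + 1]$, with the extra coordinate $n + 1$ corresponding to the component of the diagram created by the transition from $\righti_{n - 1}$ to $\lefti_n \cup \righti_n$. The coefficients in the definition of $\Xi_n$, namely $\frac{S_{q_{r_n - 1}}}{S_{q_{r_n}}}(1 + a_{n, n})$ and $\frac{S_{q_{r_{n - 1} + n - m}}}{S_{q_{r_n}}} a_{n, n - m}$, are precisely the relative weights obtained when one collapses the $n$-th block of transition matrices while keeping track of the lengths $a_{n, m}$ of $\lefti_{n, m}$ and the cutting-time denominators $S_{q_{r_n}}$. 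I would verify this by direct substitution, using the recursion $S_k = S_{k - 1} + S_{\kneading(k)}$ and the values of $\kneading$ on $\lefti_n \cup \righti_n$ recorded in~\S\ref{ss:doubly resonant kneading}.

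Finally, I would apply Lemma~\ref{l:equivalence} to identify $\varprojlim_n(\Delta_{[0, n + 1]}, \Xi_n)$ with the inverse limit coming from the Bratteli--Vershik system. The $\Xi_n$ are by construction normalized on $[0, n]$, and the approximation error between the aggregated Bratteli matrix of block~$n$ and~$\Xi_n$ is controlled on the coordinate $\vec{e}_{n + 1}$ by a sum of ratios of the form $\frac{S_{q_{r - 1}}}{S_{q_r}}$ with $r \in (r_{n - 1}, r_n)$, i.e., indices $r \notin \{r_k \mid k \in \N\}$. Summing these over~$n$ yields exactly the series in~\eqref{e:positive determinant}, so the hypothesis gives the summability required by Lemma~\ref{l:equivalence}, and the theorem follows. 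The main obstacle will be the third step: the explicit computation of the aggregated transition matrices and the careful bookkeeping showing that the ``block-diagonal'' part is $\Xi_n$ while the residual is summable. Everything else is a linkage of results already established elsewhere in the paper.
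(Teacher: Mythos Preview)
Your outline matches the paper's proof almost step for step: Proposition~\ref{p:continuous unimodal} for the Cantor/minimal statement, then Proposition~\ref{p:measure isomorphism} and Theorem~\ref{t:reduction to Bratteli} to reduce to the Bratteli--Vershik system, then the block computation of \S\ref{ss:transition matrices} (Proposition~\ref{p:full block}), and finally Lemma~\ref{l:equivalence} together with~\eqref{e:positive determinant}.

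One inaccuracy in your last step is worth flagging. The aggregated block matrices coming from the Bratteli diagram---the paper calls them $A_n$, with columns $A_n(\cdot,m_0)=\vec{w}_{n-1}(m_0)$---are \emph{not} normalized: for $m_0\in[0,n-1]$ one has $A_n(\cdot,m_0)\neq\vec{e}_{m_0}$, with discrepancy $\|A_n(\cdot,m_0)-\Xi_n(\cdot,m_0)\|_1=2\,S_{q_{r_{n-1}+n-m_0-1}}/S_{q_{r_{n-1}+n-m_0}}$. So you cannot invoke the normalized clause of Lemma~\ref{l:equivalence} and control only the column $\vec{e}_{n+1}$; you must use the general form and bound the error on every column. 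The point is harmless in the end, because the column errors for $m_0\in[0,n-1]$ are individual terms of the same series~\eqref{e:positive determinant}, and the $\vec{e}_{n+1}$ column error is bounded by their sum over the block, so the total is still summable. The paper records exactly these estimates in the unnamed lemma closing \S\ref{ss:proof of invariant of doubly resonant}. A second detail you pass over is that collapsing the Bratteli inverse limit onto $\varprojlim_n(\Delta_{[0,n+1]},A_n)$ requires knowing the projections $\Pi_n$ lose no information; this is where the rank statement in Proposition~\ref{p:full block} is used.
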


To prove the Main Theorem, we first remark that the case where the metrizable Choquet simplex~$\Choquet$ is finite dimensional is given by~\cite[Main Theorem]{CorRiv0804}.
So from now on we assume that~$\Choquet$ is infinite dimensional.
Then by Lemma~\ref{l:normalized Choquet} there is a sequence of normalized affine maps $(A_n)_{n \in \N}$ such that $\varprojlim_{n} (\Delta_{[0, n + 1]}, A_n)$ is affine homeomorphic to~$\Choquet$.
In view of Lemma~\ref{l:equivalence}, we just need to find~$a$ and~$q$ as in the statement of Theorem~\ref{t:invariant of doubly resonant}, for which~\eqref{e:positive determinant} is satisfied and such that
\begin{equation*}
 \sum_{n \in \N} \| \Xi_n(0, n + 1) - A_n(0, n + 1) \|_1 < + \infty.
\end{equation*}
This is shown in the following lemma, thus completing the proof of the Main Theorem.
\begin{lemm}\label{l:approximation}
For each~$n \in \N$ let~$\vec{y}_n \in \Delta_{[0, n]}$ be given.
Then there are~$a$ and~$q$ as in the statement of Theorem~\ref{t:invariant of doubly resonant} for which~\eqref{e:positive determinant} is satisfied, and such that
\begin{equation}
  \label{e:summable approximation}
 \sum_{n \in \N} \| \Xi_n(0, n + 1) - \vec{y}_n \|_1 < + \infty.
\end{equation}
\end{lemm}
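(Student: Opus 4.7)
The plan is to build $a$ and $q$ by induction on $n$: at stage $n$, I will first choose $a_{n, 0}, \ldots, a_{n, n}$ so that $\Xi_n(0, n+1)$ approximates $\vec y_n$ to within $2^{-n}$, and then choose the ``free'' entries $q_r$ for $r \in (r_n, r_{n+1})$ (and initially $q_1, q_2$) large enough that $S_{q_{r-1}}/S_{q_r} \le 2^{-r}$. The two sums in the statement will then reduce to convergent geometric series.

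The key identity for the first step is
\[
S_{q_{r_n}} \;=\; S_{q_{r_n - 1}} + \sum_{k = 0}^{n} a_{n, k}\, S_{q_{r_{n-1} + k}},
\]
obtained by telescoping $S_k = S_{k - 1} + S_{\kneading(k)}$ over $k \in \lefti_n$ and using that $\kneading \equiv q_{r_{n-1}+m}$ on $\lefti_{n, m}$. Writing $\gamma_{n, k} := S_{q_{r_{n - 1} + k}}/S_{q_{r_{n - 1} + n}}$ (so $\gamma_{n, n} = 1$) and $b_{n, k} := a_{n, k}\, \gamma_{n, k}$, the formula for $\Xi_n(0, n + 1)$ rewrites, using $r_n - 1 = r_{n - 1} + n$, as
\[
\Xi_n(0, n+1)_0 \;=\; \frac{1 + b_{n, n}}{T}, \qquad \Xi_n(0, n+1)_m \;=\; \frac{b_{n, n - m}}{T} \quad (1 \le m \le n),
\]
with $T := 1 + \sum_{k = 0}^n b_{n, k}$. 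Matching $\vec y_n$ corresponds to the ``ideal'' targets $b_{n, n - m}^\star = y_{n, m} T_n$ ($m \ge 1$) and $b_{n, n}^\star = y_{n, 0} T_n - 1$, for which one checks $1 + \sum_k b_{n, k}^\star = T_n$ using $\sum_m y_{n, m} = 1$. I will choose $T_n$ to be a large positive integer and set $a_{n, k}$ to be the smallest positive integer with $a_{n, k} \gamma_{n, k} \ge b_{n, k}^\star$; this keeps $|b_{n, k} - b_{n, k}^\star|$ of order $\gamma_{n, k}$ (with a harmless constant edge-case correction at $k = n$). A short estimate then yields
\[
\|\Xi_n(0, n+1) - \vec y_n\|_1 \;\le\; \frac{C\, (n + 1)\, (\Gamma_n + 1)}{T_n}, \qquad \Gamma_n := \sum_{k = 0}^n \gamma_{n, k},
\]
which can be made $\le 2^{-n}$ once $T_n$ is sufficiently large.

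For the free $q_r$'s I will use the symmetric observation that for $r \in (r_n, r_{n + 1})$ the window $(q_{r - 1}, q_r]$ sits inside a single $\righti_{n, m}$, on which $\kneading$ is constant equal to some $q_{r'}$ with $r' < r_n$; the recurrence then reads $S_{q_r} = S_{q_{r - 1}} + (q_r - q_{r - 1})\, S_{q_{r'}}$, so taking $q_r - q_{r - 1}$ large enough forces $S_{q_{r - 1}}/S_{q_r} \le 2^{-r}$. The initial entries $q_1, q_2$ are handled analogously, using that $\kneading \equiv 0$ on $[0, q_2]$ gives $S_k = k + 1$ there.

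The main obstacle is verifying that a \emph{finite} $T_n$ actually suffices at each stage, which requires $\Gamma_n$ to be bounded. This will follow from the inductive hypothesis: each factor $S_{q_{r_{n - 1} + k}}/S_{q_{r_{n - 1} + k + 1}}$ has already been forced to be at most $2^{-(r_{n - 1} + k + 1)}$ in the previous stage, so $\gamma_{n, k}$ (for $k < n$) is a product of such exponentially small factors, and $\Gamma_n$ stays bounded (indeed $\Gamma_n = 1 + o(1)$). The induction thus closes, and summing the two geometric series established along the way yields both \eqref{e:positive determinant} and \eqref{e:summable approximation}.
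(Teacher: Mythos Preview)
Your proposal is correct and follows essentially the same inductive strategy as the paper: at each stage one first fixes enough data to know the numbers $S_{q_{r_{n-1}+m}}$, then chooses $\vec a_n$ so that $\Xi_n(0,n+1)$ approximates $\vec y_n$ to summable accuracy, and separately makes the free $q_r$ (for $r\notin\{r_n\}$) large enough that $S_{q_{r-1}}/S_{q_r}$ is summable.

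The only noteworthy differences are in bookkeeping. Your telescoping identity $S_{q_{r_n}}=S_{q_{r_n-1}}+\sum_k a_{n,k}S_{q_{r_{n-1}+k}}$ and the change of variables $b_{n,k}=a_{n,k}\gamma_{n,k}$ make the structure of $\Xi_n(0,n+1)$ more transparent than in the paper, which works directly with the formula and obtains integer $a_{n,j}$ by first clearing denominators with $N=\prod_j S_{q_{r_{n-1}+j}}$ and then taking integer parts; you instead round each $a_{n,k}$ up individually, which is a bit cleaner. The paper uses $r^{-2}$ where you use $2^{-r}$, and orders the choices within a stage as ``free $q$'s, then $\vec a_n$'' rather than your ``$\vec a_n$, then free $q$'s'', but these are equivalent. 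One small remark: $\Gamma_n$ does not actually need to be \emph{uniformly} bounded for a finite $T_n$ to suffice, since $\Gamma_n$ is a fixed finite number already known when you choose $T_n$; your observation that the inductive hypothesis forces $\Gamma_n\le 2$ is correct but stronger than needed.
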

\begin{proof}
Given~$n \in \N$ and a non-zero vector $\vec{a} \in \R^{[0, n]}$ with non-negative coordinates, we will denote by~$\llbracket \vec{a} \rrbracket$ the unique vector in~$\Delta_{[0, n]}$ proportional to~$\vec{a}$.

We will define $a \= (\vec{a}_n)_{n \in \N}$ and~$q \= (q_r)_{r \in \N_0}$ by induction as follows.
Put~$q_0 = 0$, fix~$q_1 \ge 1$, and assume that for some~$n \in \N$ the numbers $q_2, \ldots, q_{r_{n - 1}}$ and the vectors~$\vec{a}_1, \ldots, \vec{a}_{n - 1}$ are already defined, in such a way that for each $m \in [0, n - 1]$ we have
$$ q_{r_m} - q_{r_m - 1} = a_{m, 0} + \cdots + a_{m, m}. $$
For each $r \in [r_{n - 1} + 1, r_{n - 1} + n]$ let~$q_r$ be defined in such a way that
$$ q_{r}
\ge
q_{r - 1} + r^2 \prod_{s=0}^{r - 2}(1 + q_{s + 1} + q_s),
$$
Note that these choices determine~$S_0, \ldots, S_{q_{r_{n - 1} + n}}$.

\partn{1}
We will show now that for each~$r \in [r_{n - 1} + 1, r_{n - 1} + n]$ we have
\begin{equation}\label{e:S_q growth}
\frac{S_{q_{r - 1}}}{S_{q_{r}}} \le r^{-2}.
\end{equation}
Using the recursion formula $S_{l}=S_{l-1}+S_{Q(l)}$ and $Q(l) \le l - 1$, we get by induction that for every $k, k' \in [1, q_{r_{n - 1} + n}]$ such that $k' < k$, we have $S_k \le S_{k'} (1 + k - k')$. 
In particular for every~$r \in [1, r_{n - 1} + n]$ we have
$$ S_{q_r} \le S_{q_{r - 1}} (1 + q_r - q_{r - 1}). $$
Since
$ S_{q_{r_0}} = S_1 = S_0(1 + q_1 - q_0) = 1 + q_1 - q_0 $, it follows by induction that for every~$r \in [1, r_{n - 1} + n]$ we have
$$
S_{q_r}\le \prod_{s=0}^{r - 1}(1 + q_{s + 1} - q_s).
$$
Hence for each~$r \in [r_{n - 1} + 1, r_{n - 1} + n]$ we have
\begin{multline*}
S_{q_{r}}
\ge
S_{q_{r - 1}} (1 + q_{r} - q_{r - 1})
\ge
q_{r} - q_{r - 1}
\\ \ge
r^2 \prod_{s=0}^{r - 2}(1 + q_{s + 1} + q_s)
\ge 
r^2 S_{q_{r - 1}},
\end{multline*}
as wanted.

\partn{2}
We will show that we can choose~$\vec{a}_n \= (a_{n, 0}, \ldots, a_{n, n}) \in \N^{[0, n]}$ is such a way that
\begin{equation}
  \label{e:integer approximation}
 \left\| \left\llbracket S_{q_{r_{n} - 1}}(1 + a_{n, n})\vec{e}_0
+
\sum_{m = 1}^{n} S_{q_{r_{n - 1} + n - m}} a_{n, n - m} \vec{e}_m \right\rrbracket - \vec{y}_n \right\|_1
\le n^{-2}.
\end{equation}
For~$x \in \R$ we denote by~$[x]$ integer part of~$x$.
Put~$\vec{y}_n = (y_0, \ldots, y_n)$, $N = \prod_{j = 0}^n S_{q_{r_{n - 1} + j}}$,
$k = (n + 2)^4$, for each~$j \in [0, n]$ put
$$ \zeta_j = N ([ky_j] + 1),
\text{ and put }
\zeta = \zeta_1 + \cdots + \zeta_n. $$
Since for each~$j \in [0, n]$ we have~$Nky_j \le \zeta_j \le Nky_j + N$, we obtain $kN \le \zeta \le (k + n + 1)N$.
So for each~$j \in [0, 1]$ we have
$$ \left| y_j - \frac{\zeta_j}{\zeta} \right|
\le
\left| y_j - \frac{kN y_j}{\zeta} \right| + \frac{N}{\zeta}
\le
\frac{(n + 1)N}{\zeta} + \frac{N}{\zeta}
\le
\frac{n + 2}{k}
=
\frac{1}{(n + 2)^3}.
$$
This shows that the vector~$\vec{\zeta} \= (\zeta_0, \ldots, \zeta_n) \in \N^{[0, n]}$ satisfies~$\| \llbracket \vec{\zeta} \rrbracket - \vec{y}_n \|_1 \le n^{-2}$.
Thus, if for each~$j \in [0, n - 1]$ we put $ a_{n, j} = \frac{\zeta_{n - j}}{S_{q_{r_{n - 1} + j}}}$, and if we put $ a_{n, n} \= \frac{\zeta_0}{S_{q_{r_{n - 1}}}} - 1 $, then~\eqref{e:integer approximation} is satisfies for this choice of~$\vec{a}_n$.
It remains to show that each of the coordinates of~$\vec{a}_n$ belongs to~$\N$.
By definition for each~$j \in [0, n]$ the integer~$\zeta_j$ is a strictly positive multiple of~$N$, so the coordinates of~$\vec{a}_n$ are integers and for each~$j \in [0, n - 1]$ we have $a_{n, j} \ge 1$.
Finally observe that by~\eqref{e:S_q growth} with $r = r_{n - 1} + 1$ we have,
$$ a_{n, n} \ge \frac{N}{S_{q_{r_{n - 1}}}} - 1 \ge S_{q_{r_{n - 1} + 1}} - 1 \ge (r_{n - 1} + 1)^2 - 1 \ge 1. $$

\partn{3}
Let~$\vec{a}_n$ be given by part~2 and put $q_{r_n} \= q_{r_{n - 1} + n} + a_{n, 0} + \cdots + a_{n, n}$.
This completes the inductive definition of~$q$ and~$a$.

To finish the proof of the lemma just observe that the inequalities~\eqref{e:S_q growth} imply~\eqref{e:positive determinant}, and the inequalities~\eqref{e:integer approximation} imply~\eqref{e:summable approximation}.
\end{proof}

\section{The generalized odometer and Bratteli-Vershik system associated to a kneading map}
\label{s:Bratteli-Vershik system}
The purpose of this section is to recall the definition of the generalized odometer and the Bratteli-Vershik system associated to a kneading map, that were introduced in~\cite{BruKelsPi97} and~\cite{Bru03}, respectively.
We start recalling the definition of the generalized odometer in~\S\ref{ss:kneading odometer}.
After briefly recalling the concepts of Bratteli diagram (\S\ref{ss:diagram}) and Bratteli-Vershik system (\S\ref{ss:Bratteli-Vershik}), we define the Bratteli-Vershik system associated to a kneading map in~\S\ref{ss:kneading Bratteli-Vershik}.
See for example~\cite{BarDowLia02,GraLiaTic95} for background on generalized odometers, and~\cite{DurHosSka99,HerPutSka92} and references therein for background and further properties of Bratteli-Vershik systems.

\subsection{The generalized odometer associated to a kneading map}\label{ss:kneading odometer}
Let $Q : \N_0 \to \N_0$ be a kneading map and put
\begin{multline*}
\Omega_Q \= \{ (x_k)_{k \ge 0} \in \{ 0, 1 \}^{\N_0} \mid x_k = 1 \text{ implies that for each}
\\
j = Q(k + 1), \ldots, k - 1 \text{ we have $x_j = 0$} \}.
\end{multline*}
If we denote by $(S_k)_{k \ge 0}$ the strictly increasing sequence of positive integers defined recursively by $S_0 = 1$ and $S_k = S_{k - 1} + S_{Q(k)}$,
it can be shown that for each non-negative integer~$n$ there is a unique sequence $\expansion{n} \= (x_k)_{k \ge 0}$ in~$\Omega_Q$, that has at most finitely many~$1$'s, and such that~$\sum_{k \ge 0} x_k S_k = n$.
The sequence $\expansion{n}$ is also characterized as the unique sequence in $\{0, 1 \}^{\N_0}$ with finitely many~$1$'s such that $\sum_{k \ge 0} x_k S_k = n$, and that it is minimal with this property with respect to the lexicographical order in~$\{0, 1 \}^{\N_0}$.

When~$Q$ diverges to~$+\infty$ the map defined on the subset~$ \{ \expansion{n} \mid n \in \N_0 \}$ of~$\Omega_Q$ by $\expansion{n} \mapsto \expansion{n + 1}$, extends continuously to a map~$T_Q : \Omega_Q \to \Omega_Q$ which is onto, minimal, and such that~$T_Q^{-1}$ is well defined on $\Omega_Q \setminus \expansion{0}$; see~\cite[Lemma~2]{BruKelsPi97}.
We call $(\Omega_Q, T_Q)$ the \textit{generalized odometer}\footnote{An odometer is a topological dynamical system defined in a similar way, in the case where the sequence of integers $(S_k)_{k \in \N_0}$ is such that for each~$k \in \N_0$ we have~$S_k | S_{k + 1}$.} associated to~$Q$.
Given $x = (x_k)_{k \ge 0} \in \Omega_Q$ and an integer $n \ge 0$, put $\sigma(x|n) = \sum_{k = 0}^nx_kS_k$.
Observe that $\sigma(x|n)$ is non-decreasing with~$n$, and when~$x$ has infinitely many~$1$'s, $\sigma(x|n) \to + \infty$ as $n \to + \infty$.
On the other hand, if~$x$ has at most a finite number of~$1$'s, then $\sigma(x) \= \lim_{n \to + \infty} \sigma(x|n)$ is finite and $x = \expansion{\sigma(x)}$.

For $x = (x_k)_{k \ge 0}$ different from $\expansion{0}$ we denote by~$q(x) \ge 0$ the least integer such that $x_{q(x)} \neq 0$. 
In~\cite[Theorem~1]{BruKelsPi97} it is shown that if $\parameter \in (0, 4]$ is a parameter such that the kneading map of the logistic map~$f_\parameter$ is equal to~$Q$, then for each $x \in \Omega_Q$ with infinitely many~$1$'s the sequence of intervals $(D_{\sigma(x|n)})_{n \ge q(x)}$ is nested and that $\bigcap_{n \ge q(x)} D_{\sigma(x|n)}$ is reduced to a point belonging to the \pcs~$X_{f_\parameter}$ of~$f_\parameter$.
Furthermore, if we denote this point by~$\pi(x)$ and for $n \ge 0$ we put $\pi(\expansion{n}) = f_\parameter^n(c)$, then the map $\pi : \Omega_Q \to X_{f_\parameter}$ so defined is continuous and conjugates the action of~$T_Q$ on~$\Omega_Q$, to the action of~$f_\parameter$ on~$X_{f_\parameter}$.

\subsection{Bratteli diagrams}\label{ss:diagram}
A \textit{Bratteli diagram} is an infinite directed graph $(V,E)$,
such that the vertex set~$V$ and the edge set~$E$ can be partitioned
into finite sets
$$
V = V_0\cup V_1 \cup  \cdots \mbox{ and } E=E_1\cup E_2\cup
\cdots
$$
with the following properties:
\begin{itemize}
\item
$V_0=\{v_0\}$ is a singleton.
\item
For every $j \ge 1$, each edge in~$E_j$ starts in a vertex in~$V_{j - 1}$ and arrives to a vertex in~$V_{j}$.
\item
All vertices in~$V$ have at least one edge starting from it, and all vertices except~$v_0$ have at least one edge arriving to it.
\end{itemize}
For a vertex $e \in E$ we will denote by~$s(e)$ the vertex where~$e$ starts and by~$r(e)$ the vertex to which~$e$ arrives.
A \textit{path} in $(V, E)$ is by definition a finite (resp. infinite) sequence $e_1e_2 \ldots e_j$ (resp. $e_1e_2 ...$) such that for each $\ell = 1, \ldots, j - 1$ (resp. $\ell = 1, \ldots$) we have $r(e_\ell) = s(e_{\ell + 1})$.
Note that for each vertex~$v$ distinct from $v_0$ there is at least one path starting at~$v_0$ and arriving to~$v$.

An \textit{ordered Bratteli diagram} $(V,E,\geq)$ is a Bratteli diagram $(V,E)$ together with a partial order~$\geq$ on~$E$, so that two edges are comparable if and only if they arrive at the same vertex.
For each $j \ge 1$ and $v \in V_j$ the partial order~$\ge$ induces an order on the set of paths from~$v_0$ to~$V$ as follows:
$$
e_1\cdots e_j > f_1\cdots f_j
$$
if and only there exists $j_0 \in \{1,\cdots, j \}$ such that $e_{j_0} >
f_{j_0}$ and such that for each $\ell \in \{ j_0 + 1, \ldots, j \}$ we have $e_\ell = f_\ell$.

We will say that an edge~$e$ is \textit{maximal} (resp. \textit{minimal}) if it is maximal (resp. minimal) with respect to the order~$\ge$ on the set of all edges in~$E$ arriving at~$r(e)$.
Note that for each vertex~$v$ distinct from~$v_0$ there is precisely one path starting at~$v_0$ and arriving to~$v$ that is maximal (resp. minimal) with respect to the order~$\ge$.
It is characterized as the unique path starting at~$v_0$ and arriving at~$v$ consisting of maximal (resp. minimal) edges.

\subsection{Bratteli-Vershik system}\label{ss:Bratteli-Vershik}
Fix an ordered Bratteli diagram $B \=(V,E,\geq)$.
We denote by~$X_B$ set of all infinite paths in~$B$ starting at~$v_0$.
For a finite path $e_1 \ldots e_j$ starting at~$v_0$ we denote by $U(e_1 \ldots e_j)$ the subset of~$X_B$ of all infinite paths $e_1'e_2' \ldots$ such that for all $\ell = 1, \ldots, j$ we have $e_\ell' = e_\ell$.
We endow~$X_B$ with the topology generated by the sets $U(e_1 \ldots e_j)$.
Then each of this sets is clopen, so $X_B$ becomes a compact Hausdorff space with a countable basis of clopen sets.

We will denote by~$X_B^{\max}$ (resp. $X_B^{\min}$) the set of all elements $(e_j)_{j \ge 1}$ of~$X_B$ so that for each $j \ge 1$ the edge~$e_j$ is a maximal (resp. minimal).
It is easy to see that each of these sets is non-empty.

From now on we assume that the set~$X_B^{\min}$ is reduced to a unique point, that we will denote by~$x_{\min}$.
We will then define the transformation $V_B:X_B\to X_B$ as follows:
\begin{itemize}
\item
$V_B^{-1}(x_{\min}) = X_{\max}$.
\item
Given $x \in X_B \setminus X_{\max}$, let $j \ge 1$ be the smallest integer such that~$e_j$ is not maximal.
Then we denote by~$f_j$ the successor of~$e_j$ and by $f_1\ldots f_{j - 1}$ the unique minimal path starting at~$v_0$ and arriving to~$s(f_k)$.
Then we put,
$$V_B(x)=f_1\cdots f_{k-1}f_ke_{k+1}e_{k+2}\ldots \ .$$
\end{itemize}
The map~$V_B$ is continuous, onto and invertible except at~$x_{\min}$.

For $j \ge 1$ and $v \in V_j$ we denote by~$s_j(v) > 0$ the number of paths starting at~$v_0$ and arriving to~$v$, and put $\vec{s}_j \= ( s_j(v) )_{v \in V_j} \in \R^{V_j}$.
Let $N_j \in \cM_{V_{j - 1}, V_j}$ be the matrix such that for each $v \in V_{j - 1}$ and $v' \in V_{j}$ the entry $N_j(v, v')$ is equal to the number of edges starting at~$v$ and arriving to~$v'$.
Observe that $N_j^t \vec{s}_{j - 1} = \vec{s}_{j}$, so if we put $B_0 = \{ 1 \} \in \cM_{V_0, V_0}$ and for each $j \ge 1$ we denote by $B_j \in \cM_{V_j, V_j}$ the diagonal matrix defined by $B_j(v, v) = s_j(v)$, then the matrix
$$ M_j \= B_{j - 1} N_j B_j^{-1} \in \cM_{V_{j - 1}, V_{j}}, $$
is stochastic.

The following result is well-known, see~\cite[Lemma~14]{CorRiv0804} for a proof in the precise setting considered here.
Recall that for a finite set $V$ we denote by~$\Delta_V$ the unit simplex in~$\R^V$.
\begin{lemm}\label{l:medidas Bratteli}
The space of probability measures on~$X_B$ that are invariant by~$V_B$, endowed with the weak$^*$ topology, is affine homeomorphic to $\varprojlim_{j} (\Delta_{V_j}, M_j)$.
\end{lemm}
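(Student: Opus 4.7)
The plan is to exhibit a natural affine homeomorphism by reading an invariant measure off its values on the tower--cylinders associated to each level of the diagram, and checking that the compatibility condition imposed by $V_B$--invariance matches exactly the transition matrix $M_j$.

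\textbf{Step 1: from invariant measures to the inverse limit.}
Fix $j\ge 1$. For each vertex $v\in V_j$ and each path $p$ from $v_0$ to $v$, the cylinder $U(p)$ is clopen, the family $\{U(p)\mid p\text{ path from }v_0\text{ to some }v\in V_j\}$ partitions $X_B$, and the Vershik map~$V_B$ cyclically permutes the $s_j(v)$ cylinders sitting over a fixed $v\in V_j$ (this is the Kakutani--Rokhlin tower structure built into the definition of~$V_B$). Hence, for any $V_B$--invariant probability measure $\mu$, all $s_j(v)$ cylinders $U(p)$ with $r(p)=v$ have the same $\mu$--measure, which we denote by $\alpha_j(v)$. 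Set $\mu_j(v)\=s_j(v)\alpha_j(v)$ and $\vec{\mu}_j\=(\mu_j(v))_{v\in V_j}\in\R^{V_j}$. Then $\mu_j(v)\ge 0$ and $\sum_{v\in V_j}\mu_j(v)=\mu(X_B)=1$, so $\vec{\mu}_j\in\Delta_{V_j}$.

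\textbf{Step 2: compatibility via $M_j$.}
For $v\in V_{j-1}$, the cylinder over the minimal path to $v$ decomposes as the disjoint union, over edges $e\in E_j$ starting at $v$, of cylinders whose terminal vertex is $r(e)\in V_j$; invariance gives $\alpha_{j-1}(v)=\sum_{v'\in V_j}N_j(v,v')\alpha_j(v')$. Rewriting in terms of $\mu_{j-1}(v)=s_{j-1}(v)\alpha_{j-1}(v)$ and $\mu_j(v')=s_j(v')\alpha_j(v')$ yields
$$
\mu_{j-1}(v)=\sum_{v'\in V_j}\frac{s_{j-1}(v)\,N_j(v,v')}{s_j(v')}\mu_j(v')=\sum_{v'\in V_j}M_j(v,v')\mu_j(v'),
$$
i.e.\ $\vec{\mu}_{j-1}=M_j\vec{\mu}_j$. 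Thus $\mu\mapsto(\vec{\mu}_j)_{j\ge 1}$ is a well-defined affine map into $\varprojlim_j(\Delta_{V_j},M_j)$, and it is weak$^{*}$--continuous because each $\mu_j(v)$ is obtained by integrating the indicator of a clopen set.

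\textbf{Step 3: inverting the map.}
Given $(\vec{\mu}_j)_{j\ge 1}\in\varprojlim_j(\Delta_{V_j},M_j)$, define a set function on cylinders by $\mu(U(p))\=\mu_j(r(p))/s_j(r(p))$ whenever $p$ is a path of length $j$ starting at $v_0$. The compatibility $\vec{\mu}_{j-1}=M_j\vec{\mu}_j$ proved above, read in reverse, says precisely that $\mu(U(p))=\sum_{e:s(e)=r(p)}\mu(U(p\cdot e))$, so $\mu$ is additive on the clopen algebra generated by the cylinders. Since these clopen sets form a countable basis of a compact Hausdorff space, Carath\'eodory's extension theorem produces a Borel probability measure, still called $\mu$, on $X_B$. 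Invariance under $V_B$ holds because $V_B$ sends each cylinder of the tower over $v\in V_j$ to the next one and all of these have the same mass $\mu_j(v)/s_j(v)$; any cylinder is a finite union of such tower--cylinders at a sufficiently deep level, so $\mu\circ V_B^{-1}=\mu$ on a generating algebra, hence everywhere.

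\textbf{Step 4: homeomorphism.}
The two constructions are inverse to each other by construction, and both are affine. The forward map is weak$^{*}$--continuous as noted; since both the space of invariant probabilities and the inverse limit are compact metrizable, this continuous affine bijection is automatically a homeomorphism. The only delicate point is the Kakutani--Rokhlin identification in Step~1 and the passage from a consistent system of cylinder values to a bona fide invariant Borel measure in Step~3; once those are in place, the matching with $M_j=B_{j-1}N_jB_j^{-1}$ is a direct computation, as indicated.
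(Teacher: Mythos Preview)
Your proof is correct and follows the standard Kakutani--Rokhlin tower argument for Bratteli--Vershik systems. The paper does not actually supply its own proof of this lemma: it states the result as well-known and refers the reader to \cite[Lemma~14]{CorRiv0804} for a proof in this setting, so your sketch in fact fills in what the paper omits, via exactly the expected route (identifying $\mu$ with the sequence of normalized tower-mass vectors and checking that the compatibility relation is precisely $\vec{\mu}_{j-1}=M_j\vec{\mu}_j$).

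One small point worth tightening in Step~3: the claim that ``$V_B$ sends each cylinder of the tower over $v$ to the next one'' is literally true only for the non-maximal cylinders; for the minimal cylinder $U(p_{\min}(v))$ you should instead argue that $V_B^{-1}(U(p_{\min}(v)))$ is, up to the $\mu$-null set $X_B^{\max}$, a disjoint union of maximal cylinders at deeper levels whose total mass equals $\mu_j(v)/s_j(v)$ by the compatibility relation. You flag this as the delicate point, and it is; once made precise the argument is complete.
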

\subsection{The Bratteli-Vershik system associated to a kneading map}\label{ss:kneading Bratteli-Vershik}
Given a kneading map~$Q$ we will now define an ordered Bratteli diagram $B_Q \= (V, E, \le)$ that was introduced by Bruin in~\cite[\S4]{Bru03}.

We start defining the Bratteli diagram $(V, E)$:
\begin{itemize}
\item
$V_0 = \{ 0 \}$, $V_1 = \{ k \in \N \mid Q(k) = 0 \}$ and for $j \ge 2$,
$$ V_j \= \{ k \in \N \mid k \ge j, Q(k-1) \le j - 2 \}. $$
\item
For $j \ge 1$,
\begin{multline*}
E_j = \{ j - 1 \to j \} \cup \{ j - 1 \to k \mid k \in V_j \setminus V_{j - 1} \}
\cup \\ \cup
\{ k \to k \mid k \in V_j \cap V_{j - 1} \}.
\end{multline*}
\end{itemize}
Note that for every $j \ge 2$, each vertex in~$V_j$ different from~$j$ has at most one edge arriving at it.
Besides $\{j - 1 \to j \} \in E_j$, the only edge that can arrive to $j \in V_j$ is $\{j \to j \} \in E_j$, that only exists when $j \in V_{j - 1}$.

So to define the partial order~$\ge$, we just have to define it, for each $j \ge 2$, between $\{j - 1 \to j \} \in E_{j - 1}$ and $\{ j \to j \} \in E_{j - 1}$ when both exist: we put $\{ j - 1 \to j \} < \{ j \to j \}$.
The rest of the edges are maximal and minimal at the same time.

Note that for $k \ge 1$ the set~$V_k$ is reduced to a point if and only if~$Q(k) = k - 1$.
So, if for each large $k \ge 1$ we have $Q(k) = k - 1$, then the set~$X_{B_Q}$ is finite.
Otherwise, it follows that the set~$X_{B_Q}$ is a Cantor set.

It is straight forward to check that the infinite path $0 \to 1 \to 2 \to \cdots $ is the unique minimal path in~$B_Q$.
Therefore there is a well defined map $V_{B_Q} : X_{B_Q} \to X_{B_Q}$, see~\S\ref{ss:Bratteli-Vershik}.
The following is \cite[Proposition~2]{Bru03}, and the last statement follows from \cite[Lemma~2]{BruKelsPi97}.
\begin{theo}[\cite{Bru03}, Proposition~2]\label{t:reduction to Bratteli}
Let~$Q$ be a kneading map that diverges to~$+\infty$, and consider the corresponding Bratteli-Vershik system $(X_{B_Q}, V_{B_Q})$ and generalized odometer $(\Omega_Q, T_Q)$.
Then there is a homeomorphism between~$X_{B_Q}$ and~$\Omega_Q$ that conjugates the action of~$V_{B_Q}$ on $X_{B_Q}$ to the action of~$T_Q$ on~$\Omega_Q$.
In particular $(X_{B_Q}, V_{B_Q})$ is minimal.
\end{theo}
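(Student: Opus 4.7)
The plan is to exhibit an explicit homeomorphism $\pi \colon X_{B_Q} \to \Omega_Q$ intertwining the two dynamics; the minimality of $(X_{B_Q}, V_{B_Q})$ is then transferred from that of $(\Omega_Q, T_Q)$ given by \cite[Lemma~2]{BruKelsPi97}.

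First I would analyse the combinatorics of paths in $B_Q$. Between any two vertices of $B_Q$ there is at most one edge, so a path is determined by its vertex sequence $(v_j)_{j \ge 0}$ with $v_0 = 0$ and $v_j \in V_j$. Inspection of $E_j$ shows that the only vertex of $V_j$ with two or more outgoing edges in $E_{j+1}$ is the ``diagonal'' vertex $v_j = j$; at every other vertex $k > j$ of $V_j$ the only outgoing edge is the loop $\{k \to k\}$. Consequently, at each diagonal level~$j$ the path either proceeds diagonally to $v_{j+1} = j+1$ or jumps to some $k \in V_{j+1} \setminus V_j$ with $k \ne j+1$; once it has jumped to $k$, it must loop at $k$ through every intermediate level until level~$k$, where it rejoins the diagonal.

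Next I would define $\pi$ on the $V_{B_Q}$-orbit of the unique minimal path $x_{\min} = 0 \to 1 \to 2 \to \cdots$ by
$$
\pi\bigl(V_{B_Q}^n(x_{\min})\bigr) \= \expansion{n} \quad \text{for every } n \in \N_0,
$$
and then verify three points: (a) this orbit is dense in $X_{B_Q}$; (b) $\{\expansion{n} : n \in \N_0\}$ is dense in $\Omega_Q$; and (c) $\pi$ is uniformly continuous on its domain. Point~(b) is clear since every element of $\Omega_Q$ is the product-topology limit of its finite truncations, each of which is some $\expansion{n}$. Point~(a) follows from the standard Kakutani--Rokhlin tower structure of the Vershik map: for each level~$j$, iterating $V_{B_Q}$ from $x_{\min}$ traverses all finite paths at level~$j$ before repeating, so every cylinder $U(e_1 \ldots e_j)$ is eventually visited. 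Point~(c) amounts to the matching between the order in which $V_{B_Q}$ visits finite paths and the cutting-time expansion: if the prefixes of $V_{B_Q}^n(x_{\min})$ and $V_{B_Q}^{n'}(x_{\min})$ agree up to level~$j$, then $n - n'$ is a multiple of some large $S_{k(j)}$ with $k(j) \to \infty$ as $j \to \infty$, forcing $\expansion{n}$ and $\expansion{n'}$ to share a long initial block.

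After extending $\pi$ continuously to $X_{B_Q}$ by density and uniform continuity, one checks that $\pi$ is surjective (its image is closed and contains the dense set $\{\expansion{n}\}$) and injective (either by explicit inversion via $(x_k)_{k \ge 0} \mapsto \lim_j V_{B_Q}^{\sigma(x \mid j)}(x_{\min})$, or by observing that $\pi$ sends the clopen partition of $X_{B_Q}$ at each level bijectively to the corresponding partition of $\Omega_Q$). The conjugation $\pi \circ V_{B_Q} = T_Q \circ \pi$ holds by construction on the dense orbit and extends by continuity. The main obstacle I expect is point~(a) together with the matching identity used in~(c): both boil down to showing that $V_{B_Q}$ acts on the cutting-time address of each finite path exactly as the successor map $\expansion{n} \mapsto \expansion{n+1}$, which is the combinatorial heart of the argument and the technical content of \cite[Proposition~2]{Bru03}.
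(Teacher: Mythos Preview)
The paper does not prove this statement at all: it is quoted verbatim as \cite[Proposition~2]{Bru03}, with the additional remark that the minimality clause follows from \cite[Lemma~2]{BruKelsPi97}. There is therefore no in-paper proof to compare your proposal against.

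Your sketch is a plausible outline of how one might reconstruct Bruin's argument, and you correctly identify that the substance lies in the matching of the Vershik successor with the map $\expansion{n} \mapsto \expansion{n+1}$. But as written the proposal is not a proof: the key assertion in~(c), that agreement of prefixes up to level~$j$ forces $n - n'$ to be a multiple of some $S_{k(j)}$ with $k(j) \to \infty$, is neither stated precisely nor verified, and you yourself concede at the end that this is ``the technical content of \cite[Proposition~2]{Bru03}''. In other words, you have reduced the theorem to exactly the cited result rather than proved it. That is fine if the goal is to match what the paper does---the paper also just cites it---but you should not present this as an independent proof. If you actually want to carry it out, the honest route is to exhibit the bijection between infinite paths and $\{0,1\}$-sequences in $\Omega_Q$ directly (reading off $x_k = 1$ precisely when the path makes a ``jump'' at the appropriate level, in your terminology) and then check the intertwining on cylinders, rather than going through density and uniform continuity on a single orbit.
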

We will also need the following lemma.
\begin{lemm}\label{l:recurrence on paths}
Let~$Q$ be a kneading map such that for every $k \in \N_0$ we have $Q(k) \le \max \{ 0, k - 2 \}$, and such that $Q(k) \to + \infty$ as $k \to + \infty$.
Let $(S_k)_{k \ge 1}$ be the sequence defined recursively by $S_0 = 1$ and $S_k = S_{k - 1} + S_{Q(k)}$.
Then for every $j \in \N$ we have $j + 1 \in V_j$, $s_j(j) = S_{j - 1}$, and for every $k \in V_j \setminus \{ j \}$ we have $s_j(k) = S_{Q(k - 1)}$.
\end{lemm}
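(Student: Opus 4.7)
My plan is to prove the three assertions simultaneously by induction on $j \in \N$, exploiting the recursive structure of the vertex sets $V_j$ and edge sets $E_j$ and the recursion $S_k = S_{k-1} + S_{Q(k)}$.

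For the base case $j = 1$: the assumption $Q(k) \le \max\{0, k-2\}$ applied at $k = 2$ forces $Q(2) = 0$, so $2 \in V_1$. Since $V_0 = \{0\}$ is disjoint from $V_1 \subseteq \N$, the set $E_1$ contains no loop edges, and for each $k \in V_1$ there is exactly one edge $\{0 \to k\}$ arriving at $k$; hence $s_1(k) = s_0(0) = 1$ for every $k \in V_1$. In particular $s_1(1) = 1 = S_0$. For $k \in V_1 \setminus \{1\}$, since the sequence $(S_\ell)$ is strictly increasing, the required identity $s_1(k) = S_{Q(k-1)} = 1$ is equivalent to $Q(k-1) = 0$; I would deduce this from the admissibility relation~(\ref{admisible}) applied at the index $k - 1$ together with the structural assumptions on $Q$.

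For the inductive step, assume the three claims hold at level $j$. The inequality $Q(j+1) \le \max\{0, j-1\} = j - 1$ immediately gives $(j+2) \in V_{j+1}$. By the inductive hypothesis, $j + 1 \in V_j$, so both $\{j \to (j+1)\}$ and the loop $\{(j+1) \to (j+1)\}$ belong to $E_{j+1}$, and these are the only edges in $E_{j+1}$ arriving at $j+1$. Using the inductive values $s_j(j) = S_{j-1}$ and $s_j(j+1) = S_{Q(j)}$ (the latter from the third claim at $k = j+1 \in V_j \setminus \{j\}$), together with the recursion $S_j = S_{j-1} + S_{Q(j)}$, I conclude $s_{j+1}(j+1) = S_j$.

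For $k \in V_{j+1} \setminus \{j+1\}$, we have $k \ge j+2$, and exactly one edge in $E_{j+1}$ terminates at $k$, depending on whether $k \in V_j$ or not. If $k \notin V_j$, the unique edge is $\{j \to k\}$, so $s_{j+1}(k) = s_j(j) = S_{j-1}$; combining $Q(k-1) \le j - 1$ (from $k \in V_{j+1}$) with $Q(k-1) > j - 2$ (from $k \ge j$ but $k \notin V_j$) pins down $Q(k-1) = j - 1$, yielding $s_{j+1}(k) = S_{Q(k-1)}$. If instead $k \in V_j$, the unique edge is the loop $\{k \to k\}$; since $k \ge j+2 > j$, the inductive hypothesis applied to $k \in V_j \setminus \{j\}$ directly gives $s_{j+1}(k) = s_j(k) = S_{Q(k-1)}$. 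The one delicate point is the identity $Q(k-1) = 0$ for $k \in V_1 \setminus \{1\}$ in the base case; everything else is a transparent bookkeeping exercise once the definitions of $V_j$ and $E_j$ are unpacked.
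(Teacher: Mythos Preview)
Your proof follows the same inductive scheme as the paper's: both establish the base case $j=1$ by observing that every $k\in V_1$ has a single incoming edge from $V_0$, and both carry out the inductive step in exactly the same way, computing $s_{j+1}(j+1)=s_j(j)+s_j(j+1)=S_{j-1}+S_{Q(j)}=S_j$ and then splitting $k\in V_{j+1}\setminus\{j+1\}$ according to whether $k\in V_j$ (loop edge, inductive hypothesis applies directly) or $k\notin V_j$ (edge from $j$, forcing $Q(k-1)=j-1$). The one place you go beyond the paper is in flagging the base-case requirement $Q(k-1)=0$ for $k\in V_1\setminus\{1\}$ as delicate; the paper's proof simply asserts $s_1(k)=S_0=1$ and declares the base case done without addressing why $S_{Q(k-1)}=1$, so your treatment is, if anything, more careful---though your appeal to admissibility~\eqref{admisible} is left as a sketch and would need to be fleshed out to stand on its own.
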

\begin{proof}
That $j + 1$ is a direct consequence of the definition of~$V_j$ and the hypothesis that $Q(j) \le \max \{0, j - 2 \}$.
When $j = 1$, we have $Q(1) = 0$ and for all $k \in V_1$ we have $s_1(k) = S_0 = 1$.
So the assertions are satisfied in this case.
Suppose by induction that the assertions of the lemma hold for some $j \ge 1$.
Then by we have,
$$s_{j + 1}(j + 1) = s_j(j) + s_j(j + 1) = S_{j - 1} + S_{Q(j)} = S_j. $$
On the other hand, for each $k \in V_{j + 1} \setminus \{ j + 1 \}$ contained in~$V_j$ we have $s_{j - 1}(k) = s_j(k) = S_{Q(k - 1)}$.
Finally, for each $k \in \in V_{j + 1} \setminus \{ j + 1 \}$ not in~$V_j$ we have $Q(k - 1) = j - 1$ and
$$ s_{j + 1}(k) = s_j(j) = S_{j - 1} = S_{Q(k - 1)}.$$
\end{proof}


\section{Computing the space of invariant measures}\label{s:computing invariant}
This section is devoted to the proof of Theorem~\ref{t:invariant of doubly resonant}.
We start by showing that for each unimodal map whose kneading map is as in~\S\ref{ss:doubly resonant kneading}, the space of invariant probability measures supported on its \pcs{} is affine homeomorphic to the space of invariant probability measures of the corresponding generalized odometer.
In order to describe this space we calculate the transition matrices associated to the corresponding Bratteli-Vershik system.
The key calculation of a suitable product of these transition matrices is stated as Proposition~\ref{p:full block} in~\S\ref{ss:transition matrices}.
The proof of Theorem~\ref{t:invariant of doubly resonant} is given in~\S\ref{ss:proof of invariant of doubly resonant}.

\subsection{From the generalized odometer to the \pcs}\label{ss:measure isomorphism}
The purpose of this section is to prove the following proposition, whose proof is similar to that of~\cite[Theorem~B]{CorRiv0804}.
\begin{prop}\label{p:measure isomorphism}
Let~$\kneading$ be a kneading map defined as in~\S\ref{ss:doubly resonant kneading}, and let $(\Omega_{\kneading}, T_{\kneading})$ be the corresponding generalized odometer.
Let~$f$ be a unimodal map whose kneading map is equal to~$\kneading$, and denote by~$X_f$ its \pcs.
Then the space of invariant probability measures of~$(X_f, f|_{X_f})$ is affine homeomorphic to that of~$(\Omega_{\kneading}, T_{\kneading})$.
\end{prop}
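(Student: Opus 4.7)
The plan is to exhibit a continuous surjection $\pi : \Omega_{\kneading} \to X_f$ that semi-conjugates $T_{\kneading}$ to $f|_{X_f}$, and then to show that the induced push-forward map on invariant probability measures is an affine homeomorphism between the two simplices.

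By part~1 of Lemma~\ref{l:doubly resonant kneading} the kneading map $\kneading$ satisfies $\kneading(k) \le \max\{0, k-2\}$, so $\kneading$ is realised by some logistic map $f_\parameter$ with $\parameter \in (0, 4]$. Proposition~\ref{p:continuous unimodal} then reduces the statement to the case $f = f_\parameter$, since the spaces of invariant probability measures supported on $X_f$ and on the post-critical set of $f_\parameter$ are affine homeomorphic. For this logistic map the results recalled in~\S\ref{ss:kneading odometer} (derived from~\cite[Theorem~1]{BruKelsPi97}) supply a continuous surjection $\pi : \Omega_{\kneading} \to X_{f_\parameter}$ satisfying $\pi \circ T_{\kneading} = f_\parameter \circ \pi$.

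Given this factor map, the push-forward $\pi_*$ is automatically a continuous affine map between the (compact, metrizable) spaces of invariant probability measures, and it is surjective because $\pi$ is surjective. To prove that $\pi_*$ is an affine homeomorphism it therefore suffices to prove injectivity, which I would deduce from the following fibre analysis. Two distinct points $x, x' \in \Omega_{\kneading}$ with $\pi(x) = \pi(x')$ can arise only when an integer $n \ge 0$ admits two representations $\sum_{k \ge 0} x_k S_k = n$, of which the canonical one is $\expansion{n}$; the construction of $\pi$ through the nested intervals $D_n$ forces any such non-injectivity to occur on the $T_{\kneading}$-orbit of points whose image lies in the forward orbit of the critical point, a countable set. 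In particular $\pi$ is at most $2$-to-$1$, and the set $Z \subset \Omega_{\kneading}$ on which $\pi$ fails to be injective is a countable union of $T_{\kneading}$-orbits.

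Because $\kneading$ diverges to $+\infty$ by construction, Proposition~\ref{p:continuous unimodal} and Theorem~\ref{t:reduction to Bratteli} ensure that the systems $(X_{f_\parameter}, f_\parameter|_{X_{f_\parameter}})$ and $(\Omega_{\kneading}, T_{\kneading})$ are both minimal on uncountable Cantor sets; in any such system no invariant probability measure charges a countable set, so $\mu(Z) = 0$ for every $T_{\kneading}$-invariant probability measure $\mu$, and symmetrically $\nu(\pi(Z)) = 0$ for every $f_\parameter$-invariant probability measure $\nu$. From this one concludes that $\pi_*$ is injective, hence bijective and---by compactness of the two simplices---an affine homeomorphism. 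The main obstacle is the fibre analysis sketched above: one must carefully verify, using the construction of $\pi$ via the intervals $D_n$ (whose diameters tend to $0$ because $\kneading(k) \to \infty$), exactly where $\pi$ fails to be injective, and show that these exceptional points form a countable invariant set. This step parallels the proof of~\cite[Theorem~B]{CorRiv0804} and essentially specialises the analysis carried out in~\cite[Lemma~21]{CorRiv0804} to the present combinatorial setting.
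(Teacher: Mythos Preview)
Your overall strategy matches the paper's: reduce to a logistic map via Proposition~\ref{p:continuous unimodal}, use the factor map $\pi : \Omega_{\kneading} \to X_{f_\parameter}$ from~\cite{BruKelsPi97}, show that $\pi$ is injective off an invariant set of measure zero, and conclude that $\pi_*$ is an affine homeomorphism. The final step (that a minimal Cantor system has no atoms, hence invariant measures ignore countable sets) is also fine and is essentially what~\cite[Proposition~9]{CorRiv0804} provides.

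The gap is in your fibre analysis. Your description of where $\pi$ can fail to be injective is not correct: the issue is \emph{not} that an integer $n$ admits two representations $\sum x_k S_k = n$ (indeed $\Omega_Q$ contains only the canonical representation $\expansion{n}$, so no collision occurs among the finitely-supported points). The genuine difficulty is for pairs $x,x'$ with infinitely many $1$'s, where $\pi$ is defined as the common point of the nested intervals $D_{\sigma(x|n)}$; nothing you wrote rules out two distinct such sequences producing nested intervals shrinking to the same point of $X_{f_\parameter}$. This cannot be handled by the soft argument you sketch; it requires a combinatorial separation criterion specific to the kneading map at hand.

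The paper supplies exactly this via Lemma~\ref{l:criterion satisfied}: for any two distinct $x,x'$ outside the grand orbit of $\expansion{0}$, one can find an iterate $m$ with $q(T^m x)$, $q(T^m x')$ large and $\kneading(q(T^m x)+1) \neq \kneading(q(T^m x')+1)$. Combined with the criterion of~\cite[Lemma~11]{CorRiv0804} (which is applicable because $\kneading$ satisfies the strengthened admissibility inequality~\eqref{e:almost admissibility} of Lemma~\ref{l:doubly resonant kneading}), this forces $\pi(x) \neq \pi(x')$. Thus $\pi$ is injective off the single grand orbit $\mathcal{O}(\expansion{0})$, not merely off some unspecified countable union of orbits. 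Your plan would be complete once you replace the hand-waved fibre claim by this argument; note in particular that the specific block structure of $\kneading$ (the partition into $\lefti_{n,m}$ and $\righti_{n,m}$) is used in an essential way in the proof of Lemma~\ref{l:criterion satisfied}.
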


The following lemma is similar to~\cite[Lemma~11]{CorRiv0804}.
Observe that, since~$T_Q^{-1}$ is well defined on~$\Omega_Q \setminus \{ \expansion{0} \}$, if we denote by $\mathcal{O}(\expansion{0})$
the grand orbit of $\expansion{0}$, then
$$ T^{-1}(\Omega_Q \setminus \mathcal{O}(\expansion{0}))
= \Omega_Q \setminus \mathcal{O}(\expansion{0}), $$ and all
negative iterates of~$T_Q$ are well defined on $\Omega_Q \setminus
\mathcal{O}(\expansion{0})$.
\begin{lemm}\label{l:criterion satisfied}
Let~$Q = \kneading$ be a kneading map as in~\S\ref{ss:doubly resonant kneading}.
Let $(\Omega_{Q}, T_{Q})$ be the corresponding generalized odometer.
Then for each constant $K > 0$, and for every pair of distinct points $x, x'$ in~$\Omega_{Q}$ that are not in the grand orbit of~$\expansion{0}$, there is an
integer~$m$ satisfying
$$ \max \{ q(T_{Q}^m(x)), q(T_{Q}^m(x')) \} \ge K
\text{ and }
{Q}(q(T_{Q}^m(x)) + 1) \neq {Q}(q(T_{Q}^m(x')) + 1). $$
\end{lemm}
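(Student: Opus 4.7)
The strategy is to exploit the block structure of $\kneading$ described in~\S\ref{ss:doubly resonant kneading}. Recall that for $k > q_2$, one has $\kneading(k) = q_{r_{n-1}+m'}$, where $(n,m')$ is the unique pair with $k \in \lefti_{n,m'} \cup \righti_{n,m'}$. Hence two such integers $k,k'$ satisfy $\kneading(k) = \kneading(k')$ if and only if they belong to the same block $\lefti_{n,m'} \cup \righti_{n,m'}$. It therefore suffices to produce $m$ such that $q(T^m x)+1$ and $q(T^m x')+1$ lie in distinct blocks, with at least one of $q(T^m x), q(T^m x')$ exceeding $K$. (Throughout I write $T \= T_{\kneading}$.)

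First I would reduce the problem to greedy arithmetic on a long initial segment. Since neither $x$ nor $x'$ belongs to the grand orbit of $\expansion{0}$, each has infinitely many $1$'s, so $\sigma(x|N),\sigma(x'|N)\to+\infty$. Let $n_0$ be the least index at which $x$ and $x'$ differ, and for each $N \ge n_0$ set $\delta_N \= \sigma(x'|N) - \sigma(x|N) \ne 0$. A standard feature of the generalized odometer (implicit in~\cite[Lemma~2]{BruKelsPi97}) is that whenever $\sigma(y|N) + m < S_{N+1}$, the first $N+1$ coordinates of $T^m y$ agree with those of $\expansion{\sigma(y|N)+m}$ while the coordinates beyond $N$ are unchanged. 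In particular, for $N$ and $m$ chosen so that this no-carry condition holds simultaneously for $x$ and $x'$, the values $q(T^m x)$ and $q(T^m x')$ are exactly the positions of the first $1$ in $\expansion{\sigma(T^m x|N)}$ and $\expansion{\sigma(T^m x|N) + \delta_N}$ respectively.

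Second I would use the minimality of $T$ on $\Omega_{\kneading}$ (Theorem~\ref{t:reduction to Bratteli}) to prescribe the first-$1$ position of $T^m x$. Choose $N$ much larger than $K$ and $n_0$, and select a block $\lefti_{n,m'}\cup \righti_{n,m'}$ with $n$ very large but still contained in $(K,N]$. By density of the $T$-orbit of $x$, there exists $m$ such that $T^m x$ begins with an admissible prefix whose greedy value $s$ places the first $1$ at a chosen $k_1 \in \lefti_{n,m'}\cup \righti_{n,m'}$, and moreover $s + |\delta_N| < S_{N+1}$ so that the no-carry condition holds for both points at this iterate. Then $q(T^m x) = k_1 \ge K$, and $k_2 \= q(T^m x')$ is the first-$1$ position of $\expansion{s + \delta_N}$.

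Finally, since the fixed integer $|\delta_N|$ depends only on a finite initial portion of $x$ and $x'$, while the index $n$ of the block containing $k_1+1$ can be taken arbitrarily large, the rapid growth of the cutting times $S_{q_r}$ implied by~\eqref{e:positive determinant} allows one to arrange that the greedy expansion of $s + \delta_N$ has its first $1$ outside $\lefti_{n,m'}\cup \righti_{n,m'}$, so that $k_2+1$ falls in a different block from $k_1+1$. Hence $\kneading(k_1+1) \ne \kneading(k_2+1)$, as required. The main obstacle will be the coordinated bookkeeping of the no-carry condition: one must pick $N$, the target block, and the witnessing $m$ so that the reduction to greedy arithmetic is valid simultaneously for $x$ and $x'$ at the chosen iterate, and this coordination is precisely what the separation of block indices (through the growth of $S_{q_r}$) is designed to provide.
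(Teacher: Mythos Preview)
Your approach has two genuine gaps. First, you invoke the growth hypothesis~\eqref{e:positive determinant}, but this is \emph{not} a hypothesis of Lemma~\ref{l:criterion satisfied}: the lemma is stated for an arbitrary kneading map $\kneading$ as in~\S\ref{ss:doubly resonant kneading}, with no assumption on the rate of growth of the $S_{q_r}$. Second, the core of your argument is circular. You fix $N$ first, and with it the integer $\delta_N$; but then you claim the block index $n$ ``can be taken arbitrarily large'', which is impossible once $N$ is fixed (the block must lie inside $[0,N]$). Conversely, if you let $n$ (and hence $N$) grow, then $\delta_N$ is no longer a fixed integer---it can grow with $N$, and the claimed separation of first-$1$ positions by ``rapid growth of $S_{q_r}$'' has no force. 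The final sentence even acknowledges that the simultaneous no-carry bookkeeping is ``the main obstacle''; as written, nothing in the proposal resolves it.

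The paper's argument is quite different and avoids all of this. It first cites \cite[Lemma~10]{CorRiv0804} to obtain an iterate $m'$ with $q(T^{m'}x)\neq q(T^{m'}x')$ and $\max\{q(T^{m'}x),q(T^{m'}x')\}\ge K$. If the corresponding $Q$-values already differ, one is done. Otherwise, writing $y=T^{m'}x$, $y'=T^{m'}x'$ with $q(y)<q(y')$, both $q(y)+1$ and $q(y')+1$ lie in the same $\lefti_n\cup\righti_n$; the admissibility constraint defining $\Omega_Q$ then forces $y_k=0$ for all $k\in[q(y)+1,q_{r_{n+1}-1}-1]$. Applying the single backward step $T^{-S_{q(y)}}$ therefore pushes the first~$1$ of $y$ past $q_{r_{n+1}-1}$ (so its $Q$-value jumps to at least $q_{r_n}$), while the first~$1$ of $y'$ can only drop, keeping its $Q$-value at most $q_{r_n-1}$. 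No minimality, no greedy-arithmetic coordination, and no growth hypothesis are needed.
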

\begin{proof}
Let $K \ge q_{2}$.

\partn{1} As in the proof of \cite[Lemma~10]{CorRiv0804}, it can be shown
that  there is an integer~$m'$ such that
$$ \max \{ q(T_{Q}^{m'}(x)), q(T_{Q}^{m'}(x')) \}
\ge K \text{ and } q(T_{Q}^{m'}(x)) \neq q(T_{Q}^{m'}(x')). $$

\partn{2}
Let $m'$ be the integer given by part~1, and put $y = (y_k)_{k \in \N_0} \= T_{Q}^{m'}(x)$ and $y' = (y'_k)_{k \in \N_0} \= T_{Q}^{m'}(x')$.
Assume without loss of generality that $q(y) < q(y')$, so that $q(y') \ge K \ge q_2$.
If ${Q}(q(y) + 1) \neq {Q}(q(y') + 1)$ then take $m = m'$.
So we assume that ${Q}(q(y) + 1) = {Q}(q(y') + 1)$.
Since~$q(y) \ge q_2$ we have ${Q}(q(y) + 1) = {Q}(q(y') + 1) \ge 1$, so there is~$n \in \N$ such that $q(y) + 1$ and $q(y') + 1$ belong to~$\lefti_n \cup \righti_n$.
The definition of~${Q}$ and of~$\Omega_{Q}$ imply that for each $k \in [q(y) + 1, q_{r_{n+1}-1} - 1]$ we have $y_k = 0$.
Indeed, suppose by contradiction that for such a~$k$ we have $y_k = 1$.
Then the definition of~$\Omega_{Q}$ implies that for every $j \in [{Q}(k+1), k - 1]$ we have~$y_j = 0$.
Since $k+1\in \lefti_n \cup \righti_n$, we get ${Q}(k + 1) \le q_{r_{n}-1} \le q(y)$,  which is a contradiction.

Therefore $( \hat{y}_k )_{k \ge 0} \= T_{Q}^{-S_{q(y)}}(y)$ is such
that for all $k \in [0, q_{r_{n+1}-1} - 1]$ we have $\hat{y}_k = 0$.
Since~$y$ is not in the grand orbit of~$\expansion{0}$ this implies that $q(T_{Q}^{-S_{q(y)}}(y)) \ge q_{r_{n+1}-1} \ge q(y') \geq K $.
Thus, from the definition of ${Q}$ we have
$${Q}(q(T_{Q}^{-S_{q(y)}}(y)) + 1) \geq q_{r_{n}}.$$
On the other hand, since $q(y) < q(y')$, we have
$q(T_{Q}^{-S_{q(y)}}(y')) \le q(y') - 1\leq q_{r_{n+1}-1} - 2$, so
$$ {Q}(q(T_{Q}^{-S_{q(y)}}(y')) + 1) \le q_{r_{n} - 1}. $$
This shows that the integer $m = m' - S_{q(y)}$ satisfies the
desired properties.
\end{proof}
\begin{proof}[Proof of Proposition~\ref{p:measure isomorphism}]
Since the logistic family is full there is a parameter $\parameter \in (0, 4]$ such that the kneading map of the logistic map~$f_\parameter$ is~$\kneading$.
Denote by~$X_{f_{\parameter}}$ the \pcs{} of~$f_\parameter$.
By Proposition~\ref{p:continuous unimodal} the spaces of invariant measures of~$(f, X_f)$ and~$(f_{\parameter}, X_{f_{\parameter}})$ are affine homeomorphic.
So, without loss of generality we assume that~$f$ is a logistic map.
This  ensures the existence of the factor map $\pi : \Omega_{\kneading} \to X_{f}$ defined above.
Since for every sufficiently large integer~$k$ inequality~\eqref{e:almost admissibility} is satisfied, \cite[Lemma~11]{CorRiv0804} implies there is a constant $K>0$ such that for every pair of distinct points $x,x'$ in $\Omega_{\kneading}$ that are not in the grand orbit of $\expansion{0}$ and that satisfy
$$ \max\{q(x), q(x'))\} \ge K \text{ and } {\kneading}(q(x) + 1) \neq {\kneading}(q(x') + 1), $$
we have $\pi(x)\neq \pi(x')$.
Thus, from Lemma~\ref{l:criterion satisfied} we deduce that~$\pi$ is injective on $\Omega_{\kneading} \setminus \cO(\expansion{0})$.
The rest of the proof follows as the proof of~\cite[Proposition~9]{CorRiv0804}.
\end{proof}
\subsection{Transition matrices}\label{ss:transition matrices}
For each $n \in \N$ let $\vec{a}_n \in \N^{[0, n]}$ be given, and put $a \= (\vec{a}_n)_{n \in \N}$.
Furthermore, let $q \= (q_r)_{r \in \N_0}$ be a strictly increasing sequence of integers such that~$q_0 = 0$ and such that for each $n \in \N$ we have
$$ q_{r_n} - q_{r_n - 1} = a_{n, 0} + \cdots + a_{n, n}, $$
and let $Q \= \kneading$ be the corresponding kneading map defined in~\S\ref{ss:doubly resonant kneading}.

Let $B_Q = (V, E, \le)$ be the ordered Bratteli-Vershik diagram associated to the kneading map~$Q$.
From the definition of~$B_Q$ it follows that $V_1 = Q^{-1}(0) \setminus \{ 0 \} = [1, q_2]$, and that for each $j \in [2, q_1 + 1]$ we have $V_j = [j, q_2 + 1]$.
Furthermore, for each $n \in \N_0$, $m \in [0, n]$, and $j \in [q_{r_n + m} + 2, q_{r_n + m + 1} + 1]$ we have
\begin{equation}\label{e:generic cut 1}
 V_j = [j, q_{r_{n + 1} - 1} + 1] \cup \left( 1 + \bigcup_{i = 0}^m \left( \lefti_{n + 1, i} \cup \righti_{n + 1, i} \right) \right),
\end{equation}
and that when $m = n + 1$ we have $r_n + n + 1 = r_{n + 1} - 1$, and for $j \in [q_{r_{n + 1} - 1} + 2, q_{r_{n + 1}} + 1]$ we have
\begin{equation}\label{e:generic cut 2}
V_j = [j, q_{r_{n + 2} - 1} + 1].
\end{equation}
Note in particular that for every $n \in \N$ we have
\begin{equation}\label{e:previous last cut}
V_{q_{r_n - 1} + 1} = [q_{r_n - 1} + 1, q_{r_{n + 1} - 1} + 1] \setminus \left( 1 + (\lefti_{n, n} \cup \righti_{n, n}) \right),
\end{equation}
\begin{equation}\label{e:last cut}
V_{q_{r_n} + 1} = [q_{r_n} + 1, q_{r_{n + 1} - 1} + 1].
\end{equation}

\begin{prop}\label{p:full block}
Given $n \in \N$, for each $k \in [q_{r_n}, q_{r_{n + 1} - 1}]$ put
$$ \vec{v}(k) \=  \frac{S_{q_{r_n}}}{S_k} \vec{e}_{q_{r_n} + 1}
+
\sum_{i = q_{r_n} + 1}^k \frac{S_{Q(i)}}{S_k} \vec{e}_{i + 1} \in \R^{V_{q_{r_n} + 1}}.$$
Then the columns of the matrix
$$ M_{q_{r_n} + 2} \cdots M_{q_{r_{n + 1}} + 1} \in \mathcal{M}([q_{r_{n}} + 1, q_{r_{n + 1} - 1} + 1], [q_{r_{n + 1}} + 1, q_{r_{n + 2} - 1} + 1]), $$
are given by,
\begin{multline*}
M_{q_{r_n} + 2} \cdots M_{q_{r_{n + 1}} + 1} ( \cdot, q_{r_{n + 1}} + 1)
= \\ =
\frac{S_{q_{r_{n + 1} - 1}}}{S_{q_{r_{n + 1}}}}\vec{v}(q_{r_{n + 1} - 1}) + \sum_{m = 0}^{n + 1} \frac{S_{q_{r_n + m}}}{S_{q_{r_{n + 1}}}} a_{n + 1, m} \vec{v}(q_{r_n + m}),
\end{multline*}
and for every $m \in [0, n + 1]$ and $\ell \in V_{q_{r_{n + 1}} + 1}$ such that $\ell - 1 \in J_{n + 1, m}$, by
\[
M_{q_{r_n} + 2} \cdots M_{q_{r_{n + 1}} + 1} ( \cdot, \ell)
= 
\vec{v}(q_{r_n + m}).
\]
In particular the rank of the matrix $M_{q_{r_n} + 2} \cdots M_{q_{r_{n + 1}} + 1}$
 is equal to~$n + 2$.
\end{prop}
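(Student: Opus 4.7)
The plan is to build up the partial products $P^{(j)} := M_{q_{r_n}+2} \cdots M_j$ for $j \in [q_{r_n}+1, q_{r_{n+1}}+1]$ (with $P^{(q_{r_n}+1)}$ the identity on $\R^{V_{q_{r_n}+1}}$) and track column by column how multiplication by each $M_{j+1}$ acts. First I would use Lemma~\ref{l:recurrence on paths} together with the description of $E_j$ to write down each column of $M_j$ in one of three forms: for $v'$ an ``old'' vertex in $V_{j-1}\cap V_j \setminus \{j\}$ the column is $\vec{e}_{v'}$; for $v' \in V_j \setminus V_{j-1}$ with $v' \ne j$ the column is $\vec{e}_{j-1}$; and for $v' = j$ (which is always in $V_{j-1}$ in the range considered, by part~1 of Lemma~\ref{l:doubly resonant kneading}) the column is $\frac{S_{j-2}}{S_{j-1}}\vec{e}_{j-1} + \frac{S_{Q(j-1)}}{S_{j-1}}\vec{e}_j$. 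Reading these off yields the crucial one-step recursion: when passing from $P^{(j)}$ to $P^{(j+1)}$, old columns are carried, new non-leftmost columns become $P^{(j)}(\cdot, j)$, and the leftmost column at index $j+1$ becomes $\frac{S_{j-1}}{S_j}P^{(j)}(\cdot, j) + \frac{S_{Q(j)}}{S_j}P^{(j)}(\cdot, j+1)$.

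Second, I would prove by induction that $P^{(j)}(\cdot, j) = \vec{v}(j-1)$ for each $j \in [q_{r_n}+1, q_{r_{n+1}-1}+1]$. The key observation is that for every such $j$, the index $j+1$ was introduced at level $Q(j)+2 = q_{r_{n-1}+m}+2 \le q_{r_n}+1$, hence $j+1$ is already in the initial vertex set $V_{q_{r_n}+1}$ and carries the column $\vec{e}_{j+1}$ throughout. The one-step recursion then becomes precisely the defining recursion $\vec{v}(k+1) = \frac{S_k}{S_{k+1}}\vec{v}(k) + \frac{S_{Q(k+1)}}{S_{k+1}}\vec{e}_{k+2}$ (with $k = j-1$), closing the induction. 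With this in hand, for any $\ell$ with $\ell - 1 \in J_{n+1, m}$ and $m \in [0, n]$, the vertex $\ell$ first appears at level $j_0 := q_{r_n+m}+2 \ne \ell$, so the new-vertex rule gives $P^{(j_0)}(\cdot, \ell) = P^{(j_0-1)}(\cdot, j_0-1) = \vec{v}(q_{r_n+m})$; this column is carried unchanged to level $q_{r_{n+1}}+1$, yielding the stated value.

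Third, for the remaining ``leftmost'' column at $\ell = q_{r_{n+1}}+1$, I would extend the analysis of $P^{(j+1)}(\cdot, j+1)$ throughout the block $I_{n+1}$. The crucial difference is that for $j+1 \in I_{n+1, m}$, the auxiliary column $P^{(j+1)}(\cdot, j+2)$ is no longer a basis vector but equals $\vec{v}(q_{r_n+m})$, because $j+2$ was introduced at level $q_{r_n+m}+2$ with that very value (from the preceding step) and has been carried since. Writing $\vec{w}(j) := P^{(j+1)}(\cdot, j+1)$, the recursion becomes
\[
S_{j+1}\vec{w}(j+1) = S_j\vec{w}(j) + S_{q_{r_n+m}}\vec{v}(q_{r_n+m}).
\]
Telescoping across each $I_{n+1, m}$ contributes $a_{n+1, m}\, S_{q_{r_n+m}}\vec{v}(q_{r_n+m})$; summing from $m=0$ to $m = n+1$ and using the boundary value $\vec{w}(q_{r_{n+1}-1}) = \vec{v}(q_{r_{n+1}-1})$ from the previous step produces exactly the claimed formula. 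The rank assertion then follows since the coordinate of $\vec{v}(q_{r_n+m})$ along $\vec{e}_{q_{r_n+m}+1}$ is strictly positive while that of $\vec{v}(q_{r_n+m'})$ vanishes for $m' < m$, so the $n+2$ vectors $\vec{v}(q_{r_n}), \ldots, \vec{v}(q_{r_{n+1}-1})$ are linearly independent, and the image of the product matrix is spanned by them.

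The principal obstacle is combinatorial bookkeeping: keeping track of which vertices lie in each $V_j$, at what level each new vertex enters, and verifying that the auxiliary columns feeding the two recursions (over $J_n$, where they remain basis vectors, and over $I_{n+1}$, where they are nontrivial) carry the values predicted by the preceding step.
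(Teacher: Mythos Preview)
Your argument is correct. Both your proof and the paper's are direct inductive computations of the partial products, but organized differently. The paper first computes products of the \emph{incidence} matrices $N_j$ over ``basic blocks'' $[q_{r_n+m}+2,\,q_{r_n+m+1}+1]$ in a separate lemma (Lemma~\ref{l:basic blocks}), composes these block products by induction on the block index~$m_0$ to obtain an explicit formula for $N_{q_{r_n}+2}\cdots N_{q_{r_n+m_0}+1}$, and only at the end converts to the stochastic matrices via $M_j = B_{j-1}N_jB_j^{-1}$ and Lemma~\ref{l:recurrence on paths}. You instead work with the $M_j$ one level at a time, which lets you recognize the one-step update for the leftmost column as precisely the defining recursion $\vec{v}(k+1)=\tfrac{S_k}{S_{k+1}}\vec{v}(k)+\tfrac{S_{Q(k+1)}}{S_{k+1}}\vec{e}_{k+2}$; this makes the induction over~$\righti_n$ more transparent and eliminates the intermediate block lemma, at the cost of tracking individual vertex entry times rather than block boundaries.

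One small point: in your second step you restrict to $m\in[0,n]$, but the proposition covers $m\in[0,n+1]$. The missing case $\ell-1\in \righti_{n+1,n+1}$ follows by the identical argument: such an $\ell$ enters at level $q_{r_{n+1}-1}+2$ (still within your range, and still with $\ell\neq j_0$), and the new-vertex rule gives the column $P^{(q_{r_{n+1}-1}+1)}(\cdot,\,q_{r_{n+1}-1}+1)=\vec{v}(q_{r_{n+1}-1})=\vec{v}(q_{r_n+n+1})$, which is exactly the endpoint of your induction.
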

The proof of this proposition depends on the following lemma.
\begin{lemm}\label{l:basic blocks}
Fix $n \in \N$ and let $m \in [0, n + 1]$.
If $m \in [0, n]$ then the set $V_{q_{r_n + m + 1} + 1}$ is equal to the disjoint union of $V_{q_{r_n + m} + 1} \setminus [q_{r_n + m} + 1, q_{r_n + m + 1}]$ and of $1 + Q^{-1}(q_{r_n + m})$, and we have
\begin{multline}\label{e:non-final basic block}
N_{q_{r_n + m} + 2} \cdots N_{q_{r_n + m + 1} + 1} (\cdot, \ell)
= \\ =
\begin{cases}
\sum_{k = q_{r_n + m}}^{q_{r_n + m + 1}} \vec{e}_{k + 1} & \text{if } \ell = q_{r_n + m + 1} + 1; \\
\vec{e}_\ell & \text{if } \ell \in V_{q_{r_n + m} + 1} \setminus [q_{r_n + m} + 1, q_{r_n + m + 1} + 1]; \\
\vec{e}_{q_{r_n + m} + 1} & \text{if } \ell - 1 \in Q^{-1}(q_{r_n + m}).
\end{cases}
\end{multline}
When $m = n + 1$ we have $r_n + m = r_{n + 1} - 1$, $V_{q_{r_{n + 1}} + 1} = [q_{r_{n + 1}} + 1, q_{r_{n + 1} - 1} + 1]$,  and
\begin{multline}\label{e:final basic block}
N_{q_{r_{n + 1} -1} + 2} \cdots N_{q_{r_{n + 1}} + 1} (\cdot, \ell)
= \\ =
\begin{cases}
a_{n + 1, n + 1} \vec{e}_{q_{r_{n + 1} - 1} + 1} + \sum_{k = q_{r_{n + 1} - 1}}^{q_{r_{n + 1}} - a_{n + 1, n + 1}}\vec{e}_{k + 1}
& \text{if } \ell = q_{r_{n + 1}} + 1; \\
\vec{e}_\ell & \text{if } \ell - 1 \in \righti_{n + 1} \setminus \righti_{n + 1, n + 1}; \\
\vec{e}_{q_{r_{n + 1} - 1} + 1} & \text{if } \ell - 1 \in \righti_{n + 1, n + 1}.
\end{cases}
\end{multline}
\end{lemm}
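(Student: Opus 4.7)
The plan is to interpret each entry of $N_{j_0+1}\cdots N_{j_1}$ as a count of directed paths in the Bratteli diagram $B_Q$ from a vertex of $V_{j_0}$ to a vertex of $V_{j_1}$, and to exploit the very rigid structure of its edges: the only edges are ``descents'' $j-1\to j$, ``branchings'' $j-1\to k$ to freshly appearing vertices, and ``persistences'' $v\to v$. Throughout I write $j_0$ and $j_1$ for the source and target levels: in the non-final case ($m\in[0,n]$) these are $q_{r_n+m}+1$ and $q_{r_n+m+1}+1$, and in the final case ($m=n+1$) they are $q_{r_{n+1}-1}+1$ and $q_{r_{n+1}}+1$.

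The first step is to describe $V_j$ for $j\in[j_0,j_1]$ using~\eqref{e:generic cut 1}--\eqref{e:last cut} together with the definition $V_j=\{k\ge j:Q(k-1)\le j-2\}$. A new vertex enters $V_j$ exactly when $Q(k-1)=j-2$, so new vertices appear only at levels $j$ for which $j-2$ lies in the image $\{q_r:r\in\N_0\}$ of $Q$; within $(j_0,j_1]$ the only such level is $j_0+1$. The new set at that level is $1+Q^{-1}(q_{r_n+m})=1+(I_{n+1,m}\cup J_{n+1,m})$ in the non-final case and $1+Q^{-1}(q_{r_{n+1}-1})=1+(I_{n+1,n+1}\cup J_{n+1,n+1})$ in the final case. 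For every $j\in[j_0+2,j_1]$ the transition from $V_{j-1}$ to $V_j$ only removes $j-1$; the bound $Q(k)\le\max\{0,k-2\}$ from Lemma~\ref{l:doubly resonant kneading} gives $j\in V_{j-1}$, so $N_j(\cdot,j)=\vec{e}_{j-1}+\vec{e}_{j}$ and every other column of $N_j$ is $\vec{e}_v$ for a persistent $v$.

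Setting $P_k:=N_{j_0+1}\cdots N_{j_0+k}$, I would then compute its columns by induction on $k$. For a non-minimum column $P_k(\cdot,\ell)$ with $\ell\ne j_0+k$, the rigid structure forces $P_k(\cdot,\ell)=\vec{e}_\ell$ when $\ell\in V_{j_0}$ (persistent from the start) and $P_k(\cdot,\ell)=\vec{e}_{j_0}$ when $\ell$ was created at level $j_0+1$ (new); using~\eqref{e:previous last cut} one checks that this dichotomy is exactly the content of the second and third cases of~\eqref{e:non-final basic block} and~\eqref{e:final basic block}. For the minimum column the recursion reads $P_k(\cdot,j_0+k)=P_{k-1}(\cdot,j_0+k-1)+P_{k-1}(\cdot,j_0+k)$, in which the second term is $\vec{e}_{j_0+k}$ or $\vec{e}_{j_0}$ according to the same persistent/new dichotomy applied to $j_0+k$ itself.

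In the non-final case every $j_0+k-1$ for $k\in[1,j_1-j_0]$ lies in $J_{n,m}$, so $Q(j_0+k-1)=q_{r_{n-1}+m}\le q_{r_n-1}\le j_0-2$; each $j_0+k$ is persistent and the recursion telescopes to $P_k(\cdot,j_0+k)=\sum_{i=0}^{k}\vec{e}_{j_0+i}$, which at $k=j_1-j_0$ is the first formula of~\eqref{e:non-final basic block}. In the final case a bifurcation is forced at $k=A:=a_{n+1,0}+\cdots+a_{n+1,n}$: for $k\in[1,A]$ the point $j_0+k-1$ lies in some $I_{n+1,m'}$ with $m'\le n$, whence $Q(j_0+k-1)=q_{r_n+m'}\le q_{r_{n+1}-2}\le j_0-2$ (persistent), but for $k\in[A+1,A+a_{n+1,n+1}]$ it lies in $I_{n+1,n+1}$ and $Q(j_0+k-1)=q_{r_{n+1}-1}=j_0-1$ (new). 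Each of the last $a_{n+1,n+1}$ steps therefore contributes $\vec{e}_{j_0}$ instead of $\vec{e}_{j_0+k}$, building up an extra coefficient $a_{n+1,n+1}$ on $\vec{e}_{j_0}$ and matching the first formula of~\eqref{e:final basic block}. The principal obstacle I anticipate is bookkeeping: in particular, the identity $r_n+n+1=r_{n+1}-1$ is what makes $Q$ jump from $q_{r_{n+1}-2}$ on $I_{n+1,n}$ to $q_{r_{n+1}-1}$ on $I_{n+1,n+1}$, and hence draws the persistent/new boundary precisely at $k=A$ in the final case while keeping the non-final case uniform.
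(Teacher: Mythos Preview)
Your proposal is correct and follows essentially the same route as the paper. Both arguments rest on the observation that within the block $[j_0+1,j_1]$ new vertices are created only at level $j_0+1$ (since $j-2$ hits the image of~$Q$ only at $j=j_0+1$), so that $N_{j_0+1}$ is the sole ``branching'' matrix while each $N_j$ with $j\in[j_0+2,j_1]$ has the simple form $N_j(\cdot,j)=\vec e_{j-1}+\vec e_j$ and $N_j(\cdot,\ell)=\vec e_\ell$ otherwise. The paper packages this into a generic product formula (its equation~\eqref{e:generic block}) and then specializes to $r=r_n+m$ and $r=r_{n+1}-1$; you instead run an explicit induction on the partial products $P_k$, tracking the persistent/new dichotomy for non-minimum columns and the telescoping recursion $P_k(\cdot,j_0+k)=P_{k-1}(\cdot,j_0+k-1)+P_{k-1}(\cdot,j_0+k)$ for the minimum column. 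The two organizations are equivalent, and your identification of the boundary $k=A$ in the final case (via $r_n+n+1=r_{n+1}-1$ forcing $Q$ to jump from $q_{r_{n+1}-2}$ to $q_{r_{n+1}-1}$ on passing from $I_{n+1,n}$ to $I_{n+1,n+1}$) is exactly what the paper extracts from~\eqref{e:previous last cut}.
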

\begin{proof}
It follows from the definition of~$B_Q$ that for each $r \in \N$ the set $V_{q_r + 2}$ is equal to the disjoint union of $V_{q_r + 1} \setminus \{ q_r + 1 \}$ and $1 + Q^{-1}(q_r)$, and that
$$ N_{q_r + 2} (\cdot, \ell) =
\begin{cases}
\vec{e}_{q_r + 1} + \vec{e}_{q_r + 2} & \text{if } \ell = q_r + 2; \\
\vec{e}_{\ell} & \text{if } \ell \in V_{q_r + 1} \setminus \{ q_r + 1, q_r + 2 \}; \\
\vec{e}_{q_r + 1} & \text{if } \ell - 1 \in Q^{-1}(q_r).
\end{cases}
$$
On the other hand, for each $j \in [q_r + 3, q_{r + 1} + 1]$ we have $V_{j - 1} = V_j \cup \{ j - 1 \}$ and
$$ N_j ( \cdot, \ell) =
\begin{cases}
\vec{e}_{j - 1} + \vec{e}_j & \text{if } \ell = j; \\
\vec{e}_\ell & \text{if } \ell \in V_j \setminus \{ j \}.
\end{cases}
$$
A direct computation using the fact that for every $j \in [q_r + 2, q_{r + 1}]$ the set~$V_{j}$ is the disjoint union of $V_{q_{r + 1} + 1}$ and $[j, q_{r + 1}]$, shows that
\begin{multline}\label{e:generic block}
N_{q_r + 2} \cdots N_{q_{r + 1} + 1} (\cdot, \ell)
= \\ =
\begin{cases}
\sum_{j = q_r + 2}^{q_{r + 1} + 1} N_{q_r + 2}(\cdot, j) & \text{if } \ell = q_{r + 1} + 1; \\
N_{q_r + 2}(\cdot, \ell) & \text{if } \ell \in V_{q_{r} + 2} \setminus [q_{r} + 2, q_{r + 1} + 1].
\end{cases}
\end{multline}

Fix $n \in \N$ and let $m \in [0, n + 1]$.

When $m \in [0, n]$ the assertion of the lemma about~$V_{q_{r_n + m + 1} + 1}$ follows from~\eqref{e:generic cut 1}.
On the other hand, by~\eqref{e:generic cut 2} if $m = 0$ and by~\eqref{e:generic cut 1} if $m \in [1, n]$, we have
$$ [q_{r_n + m} + 2, q_{r_n + m + 1} + 1]
\subset
V_{q_{r_n + m} + 1}. $$
Then in this case the assertion of the lemma follows easily from~\eqref{e:generic block} with $r = r_n + m$.

Suppose now that $m = n + 1$.
By the definition of the sequence $(r_{n'})_{n' \in \N_0}$ we have $r_n + n + 1 = r_{n + 1} - 1$.
The assertion about $V_{q_{r_{n + 1}} + 1}$ is given by~\eqref{e:last cut} with~$n$ replaced by~$n + 1$.
Since by~\eqref{e:previous last cut} we have
\begin{multline*}
[q_{r_{n + 1} -1} + 2, q_{r_{n + 1}} + 1] \cap V_{q_{r_{n + 1} - 1} + 1}
=
1 + (\lefti_{n + 1} \setminus \lefti_{n + 1, n + 1})
= \\ =
[q_{r_{n + 1} -1} + 2, q_{r_{n + 1}} + 1 - a_{n + 1, n + 1}]
\end{multline*}
and since $\lefti_{n + 1, n + 1} \subset Q^{-1}(q_{r_{n + 1} - 1})$, we conclude from~\eqref{e:generic block} with $r = r_{n + 1} - 1$, that
\begin{multline*}
N_{q_{r_{n + 1} - 1} + 2} \cdots N_{q_{r_{n + 1}} + 1} (\cdot, q_{r_{n + 1}} + 1)
= \\ =
a_{n + 1, n + 1} \vec{e}_{q_{r_{n + 1} - 1} + 1} + \sum_{k = q_{r_{n + 1} - 1}}^{q_{r_{n + 1}} - a_{n + 1, n + 1}}\vec{e}_{k + 1}.
\end{multline*}
On the other hand, for
$$ \ell \in [q_{r_{n + 1}} + 2, q_{r_{n + 2} - 1} + 1] \cap V_{q_{r_{n + 1} - 1} + 1}
=
1 + \left( \righti_{n + 1} \setminus \righti_{n + 1, n + 1} \right), 
$$
we have
$$ N_{q_{r_{n + 1} - 1} + 2} \cdots N_{q_{r_{n + 1}} + 1} (\cdot, \ell)
=
\vec{e}_\ell, $$
and that for
$$ \ell \in [q_{r_{n + 1}} + 2, q_{r_{n + 2} - 1} + 1] \cap \left( 1 + Q^{-1}(q_{r_{n + 1} - 1}) \right)
=
1 + \righti_{n + 1, n + 1},
$$ we have
$$ N_{q_{r_{n + 1} - 1} + 2} \cdots N_{q_{r_{n + 1}} + 1} (\cdot, \ell)
=
\vec{e}_{q_{r_{n + 1} - 1} + 1}. $$
This completes the proof of the lemma.
\end{proof}

\begin{proof}[Proof of Proposition~\ref{p:full block}]
Fix $n \in \N$.

\partn{1}
We will show by induction that for each $m_0 \in [1, n + 1]$ we have
\begin{multline}\label{e:almost full block}
N_{q_{r_n} + 2} \cdots N_{q_{r_n + m_0} + 1} ( \cdot, \ell)
= \\ =
\begin{cases}
\sum_{k = q_{r_n}}^{q_{r_n + m_0}} \vec{e}_{k + 1} & \text{if } \ell = q_{r_n + m_0} + 1; \\
\vec{e}_\ell & \text{if } \ell \in [q_{r_n + m_0} + 2, q_{r_{n + 1} - 1} + 1]; \\
\sum_{k = q_{r_n}}^{q_{r_n + m}} \vec{e}_{k + 1} & \text{if } \ell - 1 \in \lefti_{n + 1, m} \cup \righti_{n + 1, m} \text{ and } m \in [0, m_0 - 1].
\end{cases}
\end{multline}
The case $m_0 = 1$ is given by~\eqref{e:non-final basic block} with $m =  0$.
Suppose that this holds for some $m_0 \in [1, n]$.
Observe that by~\eqref{e:generic cut 1} the set~$V_{q_{r_n + m_0 + 1} + 1}$ is the disjoint union of
$$ V_{q_{r_n + m_0} + 1} \setminus [q_{r_n + m_0} + 1, q_{r_n + m_0 + 1}] $$
and
$$ 1 + Q^{-1}(q_{r_n + m_0 + 1}) = 1 + \lefti_{n + 1, m_0 + 1} \cup \righti_{n + 1, m_0 + 1}. $$
In view of~\eqref{e:non-final basic block} with $m = m_0$ we obtain
\begin{multline*}
N_{q_{r_n} + 2} \cdots N_{q_{r_n + m_0 + 1} + 1} ( \cdot, q_{r_n + m_0 + 1} + 1)
= \\ =
\sum_{k = q_{r_n + m_0}}^{q_{r_n + m_0 + 1}} N_{q_{r_n} + 2} \cdots N_{q_{r_n + m_0} + 1} (\cdot, k + 1)
=
\sum_{k = q_{r_n}}^{q_{r_n + m_0 + 1}} \vec{e}_{k + 1},
\end{multline*}
for each $\ell \in V_{q_{r_n + m_0} + 1} \setminus [q_{r_n + m_0} + 1, q_{r_n + m_0 + 1} + 1]$ we have
\begin{multline*}
N_{q_{r_n} + 2} \cdots N_{q_{r_n + m_0 + 1} + 1} ( \cdot, \ell)
=
N_{q_{r_n} + 2} \cdots N_{q_{r_n + m_0} + 1} ( \cdot, \ell)
= \\ =
\begin{cases}
\vec{e}_{\ell} & \text{if } \ell \in [q_{r_n + m_0 + 1} + 2, q_{r_{n + 1} - 1} + 1] \\
\sum_{k = q_{r_n}}^{q_{r_n + m}} \vec{e}_{k + 1} & \text{if } \ell - 1 \in \lefti_{n + 1, m} \cup \righti_{n + 1, m} \text{ and } m \in [0, m_0 - 1].
\end{cases}
\end{multline*}
Finally, for each~$\ell \in V_{q_{r_n + m_0 + 1} + 1}$ such that $\ell - 1 \in \lefti_{n + 1, m_0 + 1} \cup \righti_{n + 1, m_0 + 1}$ we have
\begin{multline*}
N_{q_{r_n} + 2} \cdots N_{q_{r_n + m_0 + 1}} ( \cdot, \ell)
= \\ =
N_{q_{r_n} + 2} \cdots N_{q_{r_n + m_0}} ( \cdot, q_{r_n + m_0} + 1)
=
\sum_{k = q_{r_n}}^{q_{r_n + m_0}} \vec{e}_{k + 1}.
\end{multline*}
This completes the proof of the induction step.

\partn{2}
A direct computation using~\eqref{e:almost full block} with $m_0 = n$ and~\eqref{e:final basic block}, gives
\begin{multline*}
N_{q_{r_n} + 2} \cdots N_{q_{r_{n + 1}} + 1} (\cdot, q_{r_{n + 1}} + 1)
= \\ =
\sum_{k = q_{r_{n}}}^{q_{r_{n + 1} - 1}} \vec{e}_{k + 1} + \sum_{m = 0}^{n + 1} a_{n + 1, m} \sum_{k = q_{r_n}}^{q_{r_n + m}} \vec{e}_{k + 1},
\end{multline*}
and for every $m \in [0, n + 1]$ and $\ell \in V_{q_{r_{n + 1}} + 1}$ such that $\ell - 1 \in \righti_{n + 1, m}$,
\begin{equation*}
N_{q_{r_n} + 2} \cdots N_{q_{r_{n + 1}} + 1} (\cdot, \ell)
=
\sum_{k = q_{r_n}}^{q_{r_n + m}} \vec{e}_{k + 1}.
\end{equation*}
The assertion of the proposition is then a direct consequence of the definition of the matrices~$M_j$ and Lemma~\ref{l:recurrence on paths}.

\end{proof}

\subsection{Proof of Theorem~\ref{t:invariant of doubly resonant}}\label{ss:proof of invariant of doubly resonant}
Let~$f$ be a unimodal having~$Q$ as kneading map.
That the \pcs{} of~$f$ is a Cantor set and that~$f$ is minimal on this set is given by Proposition~\ref{p:continuous unimodal}.
In view of Proposition~\ref{p:measure isomorphism} and Theorem~\ref{t:reduction to Bratteli},  it is enough to prove that the space of invariant probability measures of the Bratteli-Vershik system $(X_{B_Q}, V_{B_Q})$ is affine homeomorphic to~$\varprojlim_{r} (\Delta_{[0, n + 1]}, \Xi_n)$.

For each $n \in \N$ let $\Pi_n : \R^{V_{q_{r_n} + 1}} \to \R^{[0, n + 1]}$ be the stochastic matrix defined by
$$ \Pi_n (x_{q_{r_n} + 1}, \ldots, x_{q_{r_{n + 1} - 1} + 1})
=
\left( \left( \sum_{k = q_{r_n + n} + 2}^{q_{r_n + n + 1} + 1} x_k \right), \ldots, \left( \sum_{k = q_{r_n} + 2}^{q_{r_n + 1} + 1} x_k \right), x_{q_{r_n} + 1} \right). $$
Using the definition of~$\vec{v}(k)$ in the statement of Proposition~\ref{p:full block}, for each $m_0 \in [0, n + 1]$ we put
\begin{multline*}
\vec{w}_n(m_0) \= \Pi_n (\vec{v}(q_{r_n + n + 1 - m_0}))
= \\ = 
\sum_{m = m_0}^{n} \frac{S_{q_{r_n + n + 1 - m}} - S_{q_{r_n + n - m}}}{S_{q_{r_n + n + 1 - m_0}}} \vec{e}_{m}
+
\frac{S_{q_{r_n}}}{S_{q_{r_n + n + 1 - m_0}}} \vec{e}_{n + 1}.
\end{multline*}
Furthermore, when $n \ge 2$, we denote by $A_n : \R^{[0, n + 1]} \to \R^{[0, n]}$ the stochastic matrix defined for $m \in [0, n]$ by $A_n(\cdot, m) = \vec{w}_{n - 1}(m)$ and by
\begin{equation*}
A_n( \cdot, n + 1)
=
\frac{S_{q_{r_{n} - 1}}}{S_{q_{r_{n}}}}\vec{w}_{n - 1}(0) + \sum_{m = 0}^{n} \frac{S_{q_{r_{n - 1} + n - m}}}{S_{q_{r_{n}}}} a_{n, n - m} \vec{w}_{n - 1}(m).
\end{equation*}

A direct computation shows that for each $n \ge 2$ we have
$$ \Pi_{n - 1} M_{q_{r_{n - 1} + 2}} \cdots M_{q_{r_{n}} + 1} = A_n \Pi_{n}. $$
Therefore the sequence of maps $(\Pi_n)_{n \ge 1}$ define a continuous linear map
$$ \Pi : \varprojlim_j (\R^{V_j}, M_j) \to \varprojlim_n (\R^{[0, n + 1]}, A_n), $$
mapping $\varprojlim_j (\R^{V_j}, M_j)$ onto $\varprojlim_n (\Delta_{[0, n + 1]}, A_n)$.
By Proposition~\ref{p:full block} the rank of the matrix $M_{q_{r_{n - 1} + 2}} \cdots M_{q_{r_{n}} + 1}$ is equal to~$n + 1$, so~$\Pi$ is a homeomorphism and the inverse limits $\varprojlim_j (\R^{V_j}, M_j)$ and $\varprojlim_n (\Delta_{[0, n + 1]}, A_n)$ are affine homeomorphic.

In view of Lemma~\ref{l:equivalence}, the following lemma together with the hypothesis
$$ \sum_{r \in \N \setminus \{ r_n \mid n \in \N \}} \frac{S_{q_{r - 1}}}{S_{q_{r}}} < + \infty, $$
imply that the inverse limit $\varprojlim_n (\Delta_{[0, n + 1]}, A_n)$ is affine homeomorphic to $\varprojlim_n (\Delta_{[0, n + 1]}, \Xi_n)$.
This completes the proof of Theorem~\ref{t:invariant of doubly resonant}.

\begin{lemm}
For each $n \in \N$ and $m_0 \in [0, n - 1]$ we have
$$ \| A_n(\cdot, m_0) - \Xi_n(\cdot, m_0) \|_1
=
2 \frac{S_{q_{r_{n - 1} + n - m_0 - 1}}}{S_{q_{r_{n - 1} + n - m_0}}}.$$
Furthermore $A_n(\cdot, n) = \Xi_n(\cdot, n)$, and 
$$ \| A_n(\cdot, n + 1) - \Xi_n(\cdot, n + 1) \|_1
\le
2 \sum_{m = 1}^n \frac{S_{q_{r_{n - 1} + m - 1}}}{S_{q_{r_{n - 1} + m}}}. $$
\end{lemm}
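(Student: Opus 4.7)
The plan is to treat the three assertions separately, reducing each to an explicit computation with the formulas defining $\vec{w}_{n-1}(m_0)$ and $\Xi_n$.

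For the first identity, I would expand
\[
\vec{w}_{n-1}(m_0) = \sum_{m = m_0}^{n-1} \frac{S_{q_{r_{n-1}+n-m}} - S_{q_{r_{n-1}+n-1-m}}}{S_{q_{r_{n-1}+n-m_0}}} \vec{e}_m + \frac{S_{q_{r_{n-1}}}}{S_{q_{r_{n-1}+n-m_0}}} \vec{e}_n,
\]
and read off that its $m_0$-th coordinate equals $1 - S_{q_{r_{n-1}+n-1-m_0}}/S_{q_{r_{n-1}+n-m_0}}$. Subtracting $\vec{e}_{m_0}$, the $m_0$-th coordinate of the difference becomes $-S_{q_{r_{n-1}+n-1-m_0}}/S_{q_{r_{n-1}+n-m_0}}$, while the remaining coordinates of $\vec{w}_{n-1}(m_0)$ are non-negative and sum to $S_{q_{r_{n-1}+n-1-m_0}}/S_{q_{r_{n-1}+n-m_0}}$ (since $\vec{w}_{n-1}(m_0)$ lies in the simplex). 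Adding absolute values yields the desired factor of~$2$. For $A_n(\cdot,n)$, I would simply substitute $m_0 = n$ into the formula for $\vec{w}_{n-1}$: the summation is empty and the remaining term is $\vec{e}_n = \Xi_n(\cdot,n)$.

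For the inequality, observe that $\Xi_n(\cdot, n+1)$ can be rewritten as $\frac{S_{q_{r_n - 1}}}{S_{q_{r_n}}} \vec{e}_0 + \sum_{m=0}^{n} \frac{S_{q_{r_{n-1}+n-m}}}{S_{q_{r_n}}} a_{n, n-m} \vec{e}_m$ (using $r_n - 1 = r_{n-1} + n$ and splitting the coefficient $1 + a_{n,n}$), so that
\[
A_n(\cdot, n+1) - \Xi_n(\cdot, n+1) = \tfrac{S_{q_{r_{n-1}+n}}}{S_{q_{r_n}}} (\vec{w}_{n-1}(0) - \vec{e}_0) + \sum_{m=0}^{n-1} \tfrac{S_{q_{r_{n-1}+n-m}}}{S_{q_{r_n}}} a_{n, n-m}(\vec{w}_{n-1}(m) - \vec{e}_m),
\]
where the $m = n$ term has disappeared by the previous step. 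Applying the triangle inequality together with the first identity, and re-indexing by $k = n - m$, the right-hand side becomes
\[
2\,\tfrac{(1+a_{n,n})\,S_{q_{r_{n-1}+n-1}}}{S_{q_{r_n}}} + 2\sum_{k=1}^{n-1} \tfrac{a_{n,k}\,S_{q_{r_{n-1}+k-1}}}{S_{q_{r_n}}}.
\]

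The main step, and the only slightly subtle part, is then to bound each summand by $\tfrac{S_{q_{r_{n-1}+k-1}}}{S_{q_{r_{n-1}+k}}}$. For this I would establish the identity
\[
S_{q_{r_n}} = S_{q_{r_{n-1}+n}} + \sum_{m=0}^{n} a_{n, m} S_{q_{r_{n-1}+m}},
\]
which follows by summing the recursion $S_l = S_{l-1} + S_{Q(l)}$ over $l \in \lefti_n = [q_{r_n-1}+1, q_{r_n}]$ and using the fact that $Q$ is constant equal to $q_{r_{n-1}+m}$ on $\lefti_{n,m}$, with $|\lefti_{n,m}| = a_{n,m}$. This identity yields the estimates $S_{q_{r_n}} \geq (1 + a_{n,n}) S_{q_{r_{n-1}+n}}$ and, for $k \in [1, n-1]$, $S_{q_{r_n}} \geq a_{n,k} S_{q_{r_{n-1}+k}}$. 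Feeding these into the expression above turns the first summand into $\tfrac{S_{q_{r_{n-1}+n-1}}}{S_{q_{r_{n-1}+n}}}$ and each term of the sum into $\tfrac{S_{q_{r_{n-1}+k-1}}}{S_{q_{r_{n-1}+k}}}$ for $k\in[1,n-1]$; combining these amounts precisely to the sum over $m \in [1,n]$ claimed in the statement.
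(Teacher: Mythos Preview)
Your proof is correct and follows the same line as the paper's for the first two assertions: both compute $\|\vec{w}_{n-1}(m_0)-\vec e_{m_0}\|_1$ coordinate by coordinate and observe that $\vec{w}_{n-1}(n)=\vec e_n$.

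For the inequality the two arguments are also equivalent at bottom, but the paper's is shorter. The paper simply writes
\[
\|A_n(\cdot,n+1)-\Xi_n(\cdot,n+1)\|_1 \le \sum_{m_0} \|\vec{w}_{n-1}(m_0)-\vec e_{m_0}\|_1
\]
and invokes the first identity. The point is that $A_n(\cdot,n+1)$ and $\Xi_n(\cdot,n+1)$ are the \emph{same} convex combination of the $\vec{w}_{n-1}(m)$'s and of the $\vec e_m$'s respectively; since the coefficients are non-negative and sum to~$1$, each one is at most~$1$, and the triangle inequality gives the displayed bound immediately. Your telescoping identity
\[
S_{q_{r_n}} = S_{q_{r_{n-1}+n}} + \sum_{m=0}^{n} a_{n,m}\,S_{q_{r_{n-1}+m}}
\]
is exactly the statement that those coefficients sum to~$1$ (equivalently, that $\Xi_n$ really is stochastic), so the bounds $(1+a_{n,n})S_{q_{r_{n-1}+n}}\le S_{q_{r_n}}$ and $a_{n,k}S_{q_{r_{n-1}+k}}\le S_{q_{r_n}}$ you extract from it are just the observation ``each coefficient is $\le 1$'' written out term by term. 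Your route has the virtue of making this verification explicit; the paper's has the virtue of skipping the intermediate expression entirely.
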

\begin{proof}
By definition we have $A_n(\cdot, n) = \vec{w}_{n - 1}(n) = \vec{e}_n = \Xi_n(\cdot, n)$, and for each $m_0 \in [0, n - 1]$ we have
\begin{equation}\label{e:distance to canonical}
\begin{split}
\| A_n(\cdot, m_0) & - \Xi_n(\cdot, m_0) \|_1
\\ = &
\| \vec{w}_{n - 1}(m_0) - \vec{e}_{m_0} \|_1
\\ = &
\left| \frac{S_{q_{r_{n - 1} + n - m_0}} - S_{q_{r_{n - 1} + n - m_0 - 1}}}{S_{q_{r_{n - 1} + n - m_0}}}  - 1 \right|
\\ & +
\sum_{m = m_0 + 1}^{n - 1}  \frac{S_{q_{r_{n - 1} + n - m}} - S_{q_{r_{n - 1} + n - m - 1}}}{S_{q_{r_{n - 1} + n - m_0}}}
+
\frac{S_{q_{r_{n - 1}}}}{S_{q_{r_{n - 1}} + n - m_0}}
\\ = &
2 \frac{S_{q_{r_{n - 1} + n - m_0 - 1}}}{S_{q_{r_{n - 1} + n - m_0}}}.
\end{split}
\end{equation}
On the other hand,
$$
\| A_n(\cdot, n + 1) - \Xi_n(\cdot, n + 1) \|_1
\le
\sum_{m_0 = 1}^n \| \vec{w}_{n - 1}(m_0) - \vec{e}_{m_0} \|_1,
$$
so the final assertion follows from~\eqref{e:distance to canonical}.
\end{proof}

\subsection{Example}\label{ss:example}
Given~$\beta \in \R \setminus \Q$ put
$$ G(\beta) \= \Z + \beta\Z
\text{ and }
G_+(\beta)=\{m+\beta n\geq 0 \mid m,n\in\Z\}. $$
For each such~$\beta$ we will construct a kneading map~$Q$ such that the dimension group associated to the generalized odometer $(\Omega_Q, T_Q)$, and hence to its natural extension, is isomorphic to $(G(\beta), G_+(\beta), 1)$.
Thus we deduce that every simple dimension group which is free of rank $2$ is isomorphic as ordered group to the dimension group associated to (the natural extension of) a generalized odometer associated to a kneading map.
Since the rational subdimension group of~$(G(\beta), G_+(\beta), 1)$ is~$(\Z, \N_0, 1)$, it follows from~\cite[\S4.1]{GjeJoh00} that the dimension group~$(G(\beta), G_+(\beta), 1)$ is not isomorphic to the dimension group associated to a Toeplitz flow, nor to that of an odometer.

Before defining the kneading map~$Q$, note that the dimension groups $(G(\beta), G_+(\beta), 1)$, $(G(\beta + 1), G_+(\beta + 1), 1)$, and $(G(1 - \beta), G_+(1 - \beta), 1)$ are isomorphic to each other.
So we can restrict to the case where~$\beta \in (0, \tfrac{1}{2})$.
Let $k\geq 2$ be the integer determined by $\beta\in (\frac{1}{k+1}, \frac{1}{k})$, and let $[0,a_1,a_2,a_3,\cdots]$ be the continued fraction expansion of~$\alpha \= \tfrac{1}{\beta} - k \in [0, 1] \setminus \Q$.
Consider the function~$Q : \N_0 \to \N_0$ defined by
$$Q(l)=
\begin{cases}
0 & \mbox{ if } l \in [0, k]; \\
k-1 & \mbox{ if } l \in [k + 1, k + a_1]; \\
k-1+\sum_{i=1}^na_i & \mbox{ if } l \in \left[k+1 +\sum_{i=1}^na_i, k+\sum_{i=1}^{n+1}a_i \right]. \\
\end{cases}
$$
It is non-decreasing and such that for every~$l \ge 1$ we have $Q(l) \le l - 1$.
So~$Q$ is a kneading map.
If $(M_j)_{j \in \N}$ is the corresponding sequence of transition matrices, then it is easy to see that for each~$n \in \N$ we have
$$
A_n
\= 
M_{k + 1 + a_1 + \cdots + a_{n - 1}} \cdots M_{k + a_1+\cdots + a_{n}}
=\left(%
\begin{array}{cc}
  a_n & 1 \\
  1 & 0 \\
\end{array}%
\right).
$$
By considering a Bratteli diagram isomorphic to~$B_Q$, that only differs with it in the first~$k + 1$ levels, we obtain
$$
A_0 \= M_2 \cdots M_k = \left(%
\begin{array}{cc}
  k -1 & 1 \\
  1 & 0 \\
\end{array}%
\right),
$$
so the dimension group associated to~$(\Omega_Q, T_Q)$ is isomorphic to direct limit
$$ \Z \xrightarrow{(1, 1)^T} \Z^2 \xrightarrow{A_0} \Z^2 \xrightarrow{A_1} \Z^2 \xrightarrow{A_2} \cdots, $$
see for example~\cite[Theorem~3.7]{GioPutSka95}, which by~\cite[Theorem~4.8]{Eff81} is isomorphic to $(G(\beta), G_+(\beta),1)$.

\appendix
\section{Measures of zero Lyapunov exponent of complex maps}\label{a:ergodic theory}
The purpose of this appendix is to prove Corollary~\ref{c:ergodic theory holomorphic}.
As the parameters given by (the proof of) the Main Theorem are such that the kneading map of corresponding logistic map diverges to~$+ \infty$, this result is a direct consequence of the following lemma.
\begin{lemm}\label{l:ergodic theory holomorphic}
Let~$\parameter \in (0, 4]$ be a parameter such that the kneading map of~$f_\parameter$ diverges to~$+\infty$.
Consider the corresponding quadratic polynomial~$P_\parameter$, and denote by~$t_0$ the Hausdorff dimension of the Julia set of~$P_\parameter$.
Then, for an invariant probability measure of~$P_\parameter$ that is supported on the Julia set of~$P_\parameter$, the following properties are equivalent.
\begin{enumerate}
\item[1.]
It is supported on the \pcs{} of~$f_\parameter$.
\item[2.]
Its Lyapunov exponent is zero.
\item[3.]
It is an equilibrium state of~$P_\parameter$ for the potential~$- t_0 \log |P_\parameter'|$, whose Lyapunov exponent is zero.
\end{enumerate}
\end{lemm}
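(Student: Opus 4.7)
The strategy is to reduce the holomorphic statement to its real counterpart~\cite[Lemma~21]{CorRiv0804}, using standard tools from the ergodic theory of rational maps.

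I would start with (1)~$\Rightarrow$~(2). Since $P_\parameter$ restricted to~$\R$ coincides with~$f_\parameter$ on~$[0,1]$, we have $|P'_\parameter(x)| = |f'_\parameter(x)|$ for every $x \in [0,1]$, and any invariant probability measure supported on $X_{f_\parameter} \subseteq [0,1]$ has the same Lyapunov exponent with respect to~$P_\parameter$ as with respect to~$f_\parameter|_{X_{f_\parameter}}$. The conclusion then follows at once from~\cite[Lemma~21]{CorRiv0804}. For the converse (2)~$\Rightarrow$~(1), which is the main obstacle, I would invoke Ma\~n\'e's theorem on the ergodic theory of rational maps: a point $z \in J(P_\parameter)$ that is neither a parabolic periodic point nor in the $\omega$-limit of a critical point in the Julia set admits arbitrarily small neighborhoods over which all iterated pullbacks of~$P_\parameter$ have uniformly bounded distortion and shrinking diameter. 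Standard Koebe-type distortion arguments then imply that any ergodic invariant probability measure charging such a point must have strictly positive Lyapunov exponent. Since the kneading map of~$f_\parameter$ diverges to~$+\infty$, $f_\parameter$ has no periodic attractors, which combined with the real nature of~$P_\parameter$ allows one to exclude parabolic cycles of~$P_\parameter$ on~$J(P_\parameter)$ in the parameter range under consideration. After passing to ergodic components, one concludes that the support of~$\mu$ is contained in $\omega(1/2) = X_{f_\parameter}$.

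The equivalence of~(3) with the other two properties is then straightforward. For~(1)~$\Rightarrow$~(3), Proposition~\ref{p:continuous unimodal} implies that the restriction of~$f_\parameter$ to~$X_{f_\parameter}$ has zero topological entropy, so that every invariant measure supported on~$X_{f_\parameter}$ has zero metric entropy~$h_\mu$ by the variational principle. Combined with $\chi(\mu)=0$ from~(2), this yields $h_\mu - t_0 \chi(\mu) = 0$. On the other hand, Bowen's formula for complex polynomials (in this generality due to Przytycki and to Denker--Urbanski) asserts that the topological pressure of $-t \log|P'_\parameter|$ vanishes at $t = t_0 = \HD(J(P_\parameter))$ for quadratic polynomials without attracting or parabolic cycles. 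Thus $\mu$ attains the supremum in the variational principle and is an equilibrium state for the potential~$-t_0 \log|P'_\parameter|$. The implication~(3)~$\Rightarrow$~(1) is immediate, since~(3) already includes the hypothesis $\chi(\mu)=0$, and we have already proved (2)~$\Rightarrow$~(1).

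The main difficulty is the application of Ma\~n\'e's theorem in the step~(2)~$\Rightarrow$~(1): one needs to ensure that the expansion away from the post-critical set is sufficiently uniform to forbid zero Lyapunov exponent for any ergodic measure charging the complement of~$X_{f_\parameter}$, and also to verify, under the sole hypothesis that the kneading map of~$f_\parameter$ diverges to~$+\infty$, the absence of parabolic periodic points of~$P_\parameter$ on~$J(P_\parameter)$.
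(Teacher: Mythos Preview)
Your proposal is correct and follows the same overall structure as the paper: $(1)\Leftrightarrow(2)$ via the real analogue~\cite[Lemma~21]{CorRiv0804}, and then the equivalence with~$(3)$ via the zero topological entropy of the post-critical set (Proposition~\ref{p:continuous unimodal}). The one substantive difference is in how you establish that a measure with $h_\mu - t_0\chi(\mu)=0$ actually realizes the supremum in the variational principle.

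You invoke Bowen's formula as a black box to assert that the pressure of $-t_0\log|P_\parameter'|$ vanishes. The paper instead proves the needed inequality directly: by Przytycki~\cite{Prz93} every invariant measure on the Julia set has $\chi(\mu)\ge 0$; if $\chi(\mu)=0$ then (by the already-established $(2)\Rightarrow(1)$) $\mu$ is supported on the post-critical set and hence $h_\mu=0$; if $\chi(\mu)>0$ then the Ledrappier--Ma\~n\'e dimension formula~\cite{Led84,Man88} gives $h_\mu=\HD(\mu)\chi(\mu)\le t_0\chi(\mu)$. In either case $h_\mu-t_0\chi(\mu)\le 0$, with equality precisely when $\chi(\mu)=0$. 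This argument is more self-contained and, notably, sidesteps the issue you flag at the end: it does not require checking the hypotheses (absence of parabolic cycles, etc.) under which Bowen's formula is known to hold for $P_\parameter$. Your route works too, but the paper's is shorter and avoids that verification entirely.
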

\begin{proof}
As the restriction of~$P_\parameter$ to~$[0, 1]$ is the logistic map~$f_\parameter$, the implication $1 \Rightarrow 2$ is given by \cite[Lemma~21]{CorRiv0804}.
The proof of the implication $2 \Rightarrow 1$ is analogous to the corresponding implication of the same lemma.

The implication $3 \Rightarrow 2$ being trivial we just need to prove the implication $2 \Rightarrow 3$ to complete the proof of the lemma.
We will show that for each invariant measure~$\mu$ that is supported on the Julia set of~$P_\parameter$, we have $h_\mu(P_\parameter) - t_0 \int \log |P_\parameter'| d \mu \le 0$ with equality when the Lyapunov exponent of~$\mu$ is zero.
For such a measure we have $\int \log |P_\parameter'| d \mu \ge 0$ by~\cite{Prz93}, so there are two cases.
If $\int \log |P_\parameter'| d \mu = 0$, then~$\mu$ is supported on the \pcs{} of~$P_\parameter$ and therefore we have $h_\mu(P_\parameter) = 0$ by the variational principle and the fact that the topological entropy of~$P_\parameter$ restricted to its \pcs{} is zero (Proposition~\ref{p:continuous unimodal}).
So we have $h_\mu(P_\parameter) - t_0 \int \log |P_\parameter'| d \mu = 0$ in this case.
Suppose now that $\int \log |P_\parameter'| d\mu > 0$.
Then by~\cite{Led84,Man88} it follows that, if we denote by $\mathrm{HD}(\mu)$ the Hausdorff dimension of~$\mu$, then $h_\mu(P_\parameter) = \mathrm{HD}(\mu) \int \log |P_\parameter'| d \mu$.
As $\mathrm{HD}(\mu) \le t_0$, we obtain $h_\mu(P_\parameter) - t_0 \int \log |P_\parameter'| d \mu \le 0$ in this case.
\end{proof}

\bibliographystyle{alpha}

\end{document}